\definecolor{markercolor}{RGB}{124.9, 255, 160.65}
\pgfplotsset{width=10cm,compat=1.3}
\pgfplotsset{
tick label style={font=\small},
label style={font=\small},
legend style={font=\small}
}
\newcommand{\ba}{\begin{array}}
\newcommand{\ea}{\end{array}}
\newcommand{\be}{\begin{equation}}
\newcommand{\ee}{\end{equation}}
\newcommand{\bd}{\begin{displaymath}}
\newcommand{\ed}{\end{displaymath}}
\newcommand{\bi}{\begin{itemize}}
\newcommand{\ei}{\end{itemize}}
\newcommand{\bn}{\begin{enumerate}}
\newcommand{\en}{\end{enumerate}}
\newcommand{\pa}{\partial}
\newcommand{\f}{\frac}
\newcommand{\ci}{\cite}
\newtheorem{truth}{Theorem}
\title{Hermite Methods for the Scalar Wave Equation}
\author{Daniel Appel\"{o}\thanks{Department of Applied Mathematics, University of Colorado, Boulder, CO 80309-0526
(\email{Daniel.Appelo@colorado.edu})\funding{Supported in part by NSF Grant DMS-1319054. Any conclusions 
or recommendations expressed in this paper are those of the author and do not necessarily reflect the views the NSF}}
\and
Thomas Hagstrom\thanks{Department of Mathematics,
 Southern Methodist University, Dallas, TX 75275 (\email{thagstrom@smu.edu}) \funding{Supported in part by 
NSF Grant DMS-1418871. Any conclusions or recommendations expressed in this paper are those of the author 
and do not necessarily reflect the views of the NSF}}
\and 
Arturo Vargas\thanks{Lawrence Livermore National Laboratory, 7000 East Ave., Livermore, CA 94550 (\email{vargas45@llnl.gov})}
\funding{The work has also been performed under the auspices of the U.S.
Department of Energy by Lawrence Livermore National
Laboratory under contract DE-AC52-07NA27344. IM: LLNL-JRNL-746059.}}
\date{\today}
\begin{document}
\maketitle
\begin{abstract}
Arbitrary order dissipative and conservative Hermite methods for the scalar wave equation are presented. Both methods use $(m+1)^d$ degrees of freedom per node for the displacement in $d$-dimensions; the dissipative and conservative  methods achieve orders of accuracy $(2m-1)$ and $2m$, respectively. Stability and error analyses as well as implementation strategies for accelerators are also given.
\end{abstract}

\begin{keywords}
Wave equation, Hermite methods
\end{keywords}

\begin{AMS}
65M70, 65M60
\end{AMS} 

\section{Introduction}
We construct, analyze, and test arbitrary order dissipative and conservative Hermite methods for the scalar wave equation in a medium with constant speed of sound $c$. The degrees-of-freedom for
Hermite methods are tensor-product Taylor polynomials of degree $m$ in each coordinate centered
at the nodes of Cartesian grids, staggered in time. The dissipative method achieves space-time accuracy of order $2m-1$, while the conservative method has space-time order $2m$. 
Besides their high order of accuracy in both space and time combined, they have the special feature that they are stable for $c \Delta t \le h$, for \emph{all orders of accuracy}. This is significantly better than standard high-order element methods. Moreover, the large time steps are purely local to each cell, minimizing communication and storage requirements.   

Our primary interest in these schemes are as highly efficient building blocks in hybrid methods where most of the mesh can be taken to be rectilinear and where geometry is handled by more flexible (but less efficient) methods close to physical boundaries. In this work we restrict our consideration to square geometries with boundary conditions of Dirichlet, Neumann or periodic type, where boundary conditions are simple to apply. In previous work \cite{HDG} we considered this type of hybridization of the  Hermite methods for first order system proposed in \cite{GoodHer} with nodal discontinuous Galerkin (dG) methods. For wave equations in second order form we envision a similar hybridization where the geometry is handled by, for example, our recently developed discontinuous Galerkin methods for wave equations in second order form \cite{Upwind2}. Our dG method has the property that, based on the choice of numerical flux, it is either dissipative or conservative. 

We provide optimal stability and convergence results for both the conservative and dissipative method for one dimensional periodic domains. The analysis for the dissipative method follows the analysis for first order systems \cite{GoodHer} but here it is based on the energy of the wave equation $\int v^2 + | \nabla u |^2 \, dx$. A difference compared to \cite{GoodHer} is that we require that the (polynomial) approximation spaces of the velocity, $v$, and displacement, $u$ to differ by one degree.
Its extension to higher space dimensions, however, does not follow in a straightforward way from the Hermite method for first-order systems, requiring a specialized interpolation scheme to achieve order-independent stability at
CFL one.

The analysis of the conservative method is new and quite different from that of \cite{GoodHer}. Additionally, the analysis is done by introducing what we denote conserved variables, this simplifies the analysis considerably compared to the classical, \cite{Jol-2003}, way to analyze conservative methods for wave equations. To the best of our knowledge this is an original contribution.   

We also note that this paper presents the first application of Hermite methods applied to wave equations in second order form.

Many attractive high-order accurate methods have been proposed for wave equations in second order form. For example there are finite difference methods based on the summation-by-parts framework \cite{SBP} and upwinding \cite{banks2012upwind,Banks2015}, finite element methods which use mass lumping to achieve efficiency \cite{Jol-2003}, discontinuous Galerkin methods \cite{GSSwave,IPDG_Elastic,Upwind2,ChouShuXing2014}, as well as more exotic methods such as Fourier-Continuation \cite{FCAD1,FCAD2} and Galerkin differences \cite{BanksHagstromGD}.  
However, we believe that the unique properties of Hermite methods make them an interesting alternative to existing approaches.

The rest of the paper is organized as follows. In Section \ref{Prelim} we recall the basic properties of Hermite interpolation which will be used later on. In Section \ref{Diss} we describe and analyze the dissipative discretization, doing the same in Section \ref{Cons} for the conservative scheme. In Section \ref{sec:num_exp} we verify the convergence properties in one and two space dimensions and finally in Section \ref{sec:occa} demonstrate highly efficient performance of our GPU implementation.  

\section{Preliminaries}\label{Prelim}
In this section we introduce the grids we will use for the discretizations, discuss the representation of the approximations as Hermite interpolating polynomials and state some of the basic properties of Hermite interpolation. For simplicity we often restrict attention to one or two space dimensions as extensions to higher space dimensions are straightforward.  

\subsection{Grids}
Let the domain $x \in [X_L,X_R]$ be discretized by the grid
\be \label{eq:grid1D}
 x_i = X_L + i h_x, \ \ \ h_x = (X_R-X_L)/n_x, 
\ee
and let the two dimensional domain $(x,y) \in [X_L,X_R] \times [Y_B,Y_T]$ be discretized by the grid
\be \label{eq:grid2D}
 (x_i,y_j) = (X_L + i h_x, Y_B + j h_y), \ \ \ h_x = \frac{X_R-X_L}{n_x}, \ \ \ h_y = \frac{Y_T-Y_B}{n_y}.
\ee
We let the indices $i,j$ be integers or half-integers. The primal grid corresponds to
the indices $i = 0,\ldots, n_x$, $j = 0,\ldots, n_y$ and the dual grid to $i = 1/2,\ldots, n_x-1/2$, $j = 1/2,\ldots, n_y-1/2$.

In time we suppose data is defined on the primal grid when $t_n = n \Delta t$, $n=0,1,2,\ldots$, and on the dual grid when $n=1/2,3/2,5/2,\ldots$. That is, we evolve the solution between grids one half time step $\Delta t / 2$ at a time. 

\subsection{Approximation by Hermite Interpolation}
The degrees-of-freedom for our methods are approximations to the solution and its scaled derivatives at the nodes. For example,
in one space dimension the data at node $x_i$ is given by
\begin{equation*}
c_{l,i}^n \approx \f {h_x^l}{l!} \f {\pa^l u}{\pa x^l} (x_i,t_n) , \ \ \ l=0, \ldots ,m.
\end{equation*}
Our global approximation to $u(x,t_n)$ is then the piecewise degree $2m+1$ Hermite interpolant, $p^n (x)$, to the nodal data.
Precisely,
\be
p^n(x)= p_{i+1/2}^n (x) \equiv \sum_{l=0}^{2m+1} c_{l,0,i+1/2}^n \left( \f {x-x_{i+1/2}}{h_x} \right)^l ,
\ \ \ x \in [x_i,x_{i+1}] , \label{Hermn} 
\ee
where $p_{i+1/2}^n (x)$ is the unique polynomial of degree $2m+1$ satisfying for $l=0, \ldots ,m$
\be
\f {h_x^l}{l!} \f {\pa^l p_{i+1/2}^n}{\pa x^l} (x_i)=c_{l,i}^n , \ \ \  
\f {h_x^l}{l!} \f {\pa^l p_{i+1/2}^n}{\pa x^l} (x_{i+1})=c_{l,i+1}^n . \label{Hermcon}
\ee

In $d$ spatial dimensions we approximate the solution by tensor product polynomials. For example if $d=2$
the data at node
$(x_i,y_j)$ is
\begin{equation*}
c_{k,l,i,j}^n \approx \f {h_x^k h_y^l}{k! l!} \f {\pa^{k+l} u}{\pa x^k \pa y^l} (x_i,y_j,t_n) , \ \ \ k,l=0, \ldots m.
\end{equation*}
The global approximation $p^n$ is then given by
\be
p^n(x,y)= p_{i+1/2,j+1/2}^n (x,y) \equiv \sum_{k=0}^{2m+1} \sum_{l=0}^{2m+1} c_{k,l,0,i+1/2,j+1/2}^n
\left( \f {x-x_{i+1/2}}{h_x} \right)^k \left( \f {y-y_{j+1/2}}{h_y} \right)^l , \label{Hermn2}
\ee
for $(x,y) \in [x_i,x_{i+1}] \times [y_j,y_{j+1}]$ , 
where $p_{i+1/2,j+1/2}^n (x)$ is the unique tensor-product polynomial of degree $(2m+1,2m+1)$ satisfying for $k,l=0, \ldots ,m$, $i_x=i,i+1$, $j_y=j,j+1$
\be
\f {h_x^k h_y^l}{k! l!} \f {\pa^{k+l} u}{\pa x^k \pa y^l} (x_{i_x},y_{j_y},t_n) = c_{k,l,i_x j_y}^n. \label{Hermcon2}
\ee
We note that here we are using the same degrees in $x$ and $y$ to approximate $u$, but we also make use of
interpolants with differing degree, $m_x \neq m_y$, in different variables. These will be of degree $(2m_x+1,2m_y+1)$
and are defined by the obvious modifications
to (\ref{Hermn2})-(\ref{Hermcon2}). Such interpolants are also useful for adaptive computations \ci{HermitePadapt}. 

In what follows we will use the notation $\mathcal{I}_{m_1, \ldots ,m_d} f$ to denote the global piecewise polynomial Hermite interpolants defined above of some function $f$ assuming data of the degree $m_j$ in the $j$th variable.
Although these operators depend on whether data is given on the primal or dual
grid we suppress this in our notation.

Also, we note that the scaling used here is not strictly necessary but it makes the interpolation and the evaluation of the interpolants (e.g. used for dense output) slightly better conditioned as the matrix entries and coefficients do not vary too much in size. 

\subsubsection{Properties of Hermite Interpolation}

We next recall the basic properties of Hermite interpolation when applied to smooth $2\pi$-periodic functions in $d$ space dimensions. Much of the analysis of the method makes use of the seminorm $\arrowvert \cdot \arrowvert_{m_1+1,
\ldots ,m_d+1}$ defined below. Here we introduce the notation
$\mathcal{T}^d = [-\pi,\pi]^d$ and the usual multiindex notation $D^{\alpha}$, $\alpha=(\alpha_1, \ldots , \alpha_d)$ for mixed partial
derivatives of order $\alpha_j$ in the $j$th coordinate. In $d$ dimensions this semi-norm is induced by the semi-inner-product:
\begin{equation*}
\langle f,g \rangle_{m_1+1, \ldots , m_d+1} = \int_{\mathcal{T}^d} D^{\alpha} f \cdot D^{\alpha} g dV, \ \ \ 
\alpha=(m_1+1, \ldots , m_d+1) .
\end{equation*}
Note that when $d=1$ this is simply the order $m+1$ Sobolev semi-inner-product. 

A central observation is that Hermite interpolation is in fact a projection operator in this seminorm.
Precisely, as shown in \cite{GoodHer}:
\begin{equation}
  \langle \mathcal{I}_{m_1, \ldots ,m_d} u , v- \mathcal{I}_{m_1, \ldots ,m_d} v \rangle_{m_1+1, \ldots , m_d+1} =0 . \label{orthlem}
\end{equation}
From (\ref{orthlem}) we deduce the Pythagorean Theorem:
\begin{equation}
  \arrowvert u \arrowvert_{m_1+1, \ldots ,m_d+1}^2 = \arrowvert \mathcal{I}_{m_1, \ldots ,m_d} u \arrowvert_{m_1+1, \ldots ,m_d+1}^2 + \arrowvert u- \mathcal{I}_{m_1, \ldots ,m_d} u \arrowvert_{m_1+1, \ldots , m_d+1}^2 , \label{Pyth}
\end{equation}
as well as the fact that the interpolation operators are self-adjoint:
\begin{eqnarray*}
  \langle \mathcal{I}_{m_1, \ldots ,m_d} u , v \rangle_{m_1+1, \ldots , m_d+1} & = & \langle \mathcal{I}_{m_1, \ldots ,m_d} u , \mathcal{I}_{m_1, \ldots ,m_d} v \rangle_{m_1+1, \ldots , m_d+1} +
  \langle \mathcal{I}_{m_1, \ldots ,m_d} u , v- \mathcal{I}_{m_1, \ldots ,m_d} v \rangle_{m_1+1, \ldots , m_d+1} \nonumber \\ & = & \langle u , \mathcal{I}_{m_1, \ldots ,m_d} v \rangle_{m_1+1, \ldots , m_d+1} +
  \langle \mathcal{I}_{m_1, \ldots ,m_d} u -u , \mathcal{I}_{m_1, \ldots ,m_d} v \rangle_{m_1+1, \ldots , m_d+1} \nonumber \\ & = & \langle u , \mathcal{I}_{m_1, \ldots ,m_d} v \rangle_{m_1+1, \ldots , m_d+1} .
\end{eqnarray*}

Denote the $L_2$-inner product and norm by 
\bd
(f,g) = \int_{\mathcal{T}^d} f(x)^T g(x) \, dV, \ \ \ \|g \|^2 = (g,g),
\ed
and the Sobolev norms on $H^q_{\rm per}$ by 
\bd
\|f\|^2_{H^q} = \sum_{\arrowvert \alpha \arrowvert \leq q} \| D^{\alpha} \|^2 .
\ed
Then, for $d=1$, the following lemma (Lemma 3.1 and Corollary 3.1 in \cite{GoodHer}) details the basic approximation properties of Hermite interpolation. We note that these results are derived locally on each subinterval and thus generally extend to nonperiodic functions.   
\begin{lemma} \label{approx_lemma}
The Hermite interpolation operator $\mathcal{I}_m$ satisfies:
\begin{eqnarray}
\|g-\mathcal{I}_m g \| &\le& C h_x^{2m+2} \left\| \f{d^{2m+2} g }{d x^{2m+2}}  \right\| \ \    \text{for} \ \   g \in H^{2m+2}_{\rm per}, \label{approx_lemma1} \\
\| \f{d^{s}  }{d x^{s}} (g-\mathcal{I}_m g) \| &\le& C h_x^{2m+2-s} \left\| \f{d^{2m+2} g }{d x^{2m+2}}  \right\| \ \  \text{for} \ \ g \in H^{2m+2}_{\rm per}, \ \ s \leq 2m+2, \label{approx_lemma2} \\
\|g-\mathcal{I}_m g \| &\le& C h_x^{m+1} \left\| \f{d^{m+1} g }{d x^{m+1}}  \right\| \ \    \text{for} \ \   g \in H^{m+1}_{\rm per}, \label{approx_lemma3} \\
\| (f-\mathcal{I}_m f) \| &\le& C h_x^{m+1} \left\| \f{d^{m+1}  }{d x^{m+1}} (f-\mathcal{I}_m f)  \right\| \ \  \text{for} \ \ f \in H^{m+1}_{\rm per}. \label{approx_lemma4}
\end{eqnarray}  
\end{lemma}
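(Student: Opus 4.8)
The plan is to prove each bound locally on a single subinterval $[x_i,x_{i+1}]$ and then sum, using that the square of the $L_2$ norm is additive over subintervals; periodicity means no subinterval needs special treatment, though the same local argument also covers the nonperiodic case. After rescaling $x = x_i + h_x \hat x$ to the reference interval $\hat I = [0,1]$, the operator $\mathcal{I}_m$ becomes a fixed reference Hermite interpolant $\hat{\mathcal{I}}_m$ matching the first $m$ derivatives at $\hat x = 0$ and $\hat x = 1$, and each derivative $d/dx$ contributes a factor $h_x^{-1}$. So it suffices to prove the $h_x$-free analogues on $\hat I$; undoing the scaling on each subinterval then produces the stated powers of $h_x$, since the $x$-derivatives contribute the powers $h_x^{-1}$ while the common $h_x^{1/2}$ coming from the change of variables in the integrals cancels between the two sides.

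For (\ref{approx_lemma1})--(\ref{approx_lemma2}) the key fact is that $\hat{\mathcal{I}}_m$ reproduces every polynomial of degree $\le 2m+1$: such a polynomial has $2m+2$ coefficients and is uniquely pinned down by the $m+1$ Hermite data at each of the two endpoints. Moreover, for $g \in H^{2m+2}(\hat I) \hookrightarrow C^{2m+1}(\hat I)$ the Hermite data are bounded linear functionals and the reference basis functions are fixed polynomials, so $\hat{\mathcal{I}}_m$ is bounded from $H^{2m+2}(\hat I)$ into $H^s(\hat I)$ for $0 \le s \le 2m+2$. Since $I - \hat{\mathcal{I}}_m$ annihilates polynomials of degree $\le 2m+1$, the Bramble--Hilbert lemma gives $\| \f{d^s}{d\hat x^s}(g - \hat{\mathcal{I}}_m g)\|_{L_2(\hat I)} \le C \| \f{d^{2m+2}g}{d\hat x^{2m+2}} \|_{L_2(\hat I)}$, only the top-order seminorm surviving on the right. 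Rescaling and summing squares over the subintervals yields (\ref{approx_lemma2}), with (\ref{approx_lemma1}) the case $s=0$; the endpoint $s = 2m+2$ is trivial because $\hat{\mathcal{I}}_m g$ has degree $2m+1$.

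For (\ref{approx_lemma3}) the same argument runs at lower regularity: for $g \in H^{m+1}(\hat I) \hookrightarrow C^{m}(\hat I)$ the pointwise data of order $\le m$ are still bounded functionals, $\hat{\mathcal{I}}_m$ maps $H^{m+1}(\hat I)$ boundedly into $L_2(\hat I)$, and it still reproduces polynomials of degree $\le m$, so Bramble--Hilbert gives $\| g - \hat{\mathcal{I}}_m g \|_{L_2(\hat I)} \le C \| \f{d^{m+1}g}{d\hat x^{m+1}} \|_{L_2(\hat I)}$ and scaling produces the factor $h_x^{m+1}$. For (\ref{approx_lemma4}), set $e = f - \mathcal{I}_m f$; the interpolation conditions (\ref{Hermcon}) force $e$ and its first $m$ derivatives to vanish at every node, so on $\hat I$ the rescaled error lies in the closed subspace $V = \{ w \in H^{m+1}(\hat I) : \f{d^l w}{d\hat x^l}(0) = \f{d^l w}{d\hat x^l}(1) = 0, \ l = 0,\ldots,m \}$ of $H^{m+1}(\hat I)$. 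On $V$ the functional $w \mapsto \| \f{d^{m+1}w}{d\hat x^{m+1}} \|_{L_2(\hat I)}$ is a genuine norm — if the top derivative vanishes then $w$ is a polynomial of degree $m$, and the vanishing endpoint data force $w \equiv 0$ — so by the usual Rellich compactness argument $\| w \|_{L_2(\hat I)} \le C \| \f{d^{m+1}w}{d\hat x^{m+1}} \|_{L_2(\hat I)}$ on $V$; scaling and summing gives (\ref{approx_lemma4}).

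The mechanical parts — the change-of-variables bookkeeping and the summation over subintervals — are routine. The step requiring the most care is establishing that the reference interpolant is well-defined and bounded at the low regularity $H^{m+1}(\hat I)$ used in (\ref{approx_lemma3})--(\ref{approx_lemma4}): one must invoke the one-dimensional Sobolev embedding $H^{m+1}(\hat I) \hookrightarrow C^m(\hat I)$ to see that the pointwise derivative data of order $\le m$ are continuous linear functionals, since otherwise $\mathcal{I}_m f$ would not even be defined. Everything else is the standard Bramble--Hilbert and Poincar\'e machinery applied to the fixed Hermite element.
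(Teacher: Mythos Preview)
Your proof is correct but takes a different route from the paper. The paper derives (\ref{approx_lemma1})--(\ref{approx_lemma3}) directly from the Peano kernel representation of the Hermite remainder (citing Birkhoff et al.), whereas you use the scale-to-reference-element plus Bramble--Hilbert machinery; both are standard and yield the same local-then-sum structure, with the Peano kernel giving an explicit integral formula for the error while Bramble--Hilbert is more abstract but equally effective. The more notable difference is in (\ref{approx_lemma4}): the paper observes that with $g = f - \mathcal{I}_m f$ one has $\mathcal{I}_m g = 0$ (since $\mathcal{I}_m$ is a projection), so (\ref{approx_lemma4}) is literally (\ref{approx_lemma3}) applied to this $g$. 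Your Poincar\'e/Rellich argument on the subspace $V$ is correct and is in fact the content behind that one-line reduction --- the vanishing Hermite data of $e$ is exactly the statement $\mathcal{I}_m e = 0$ --- but the paper's phrasing avoids re-deriving a Poincar\'e inequality by simply invoking the already-established (\ref{approx_lemma3}).
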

The estimates (\ref{approx_lemma1}) - (\ref{approx_lemma3}) follow directly from the Peano kernel representation, see \cite{birkhoff1968piecewise}, and (\ref{approx_lemma4}) follows from (\ref{approx_lemma3}) with $g = f-\mathcal{I}_m f$, as for that $g$ it holds that $\mathcal{I}_m g = 0$. We note that for $d > 1$ the analogous results require the use of more complicated Sobolev seminorms
(Lemma 6.1 in \ci{GoodHer}):
\begin{equation*} 
\left\arrowvert f \right\arrowvert_{[r_1, \ldots . r_d]}^2 \equiv \sum_{\alpha \in \mathcal{Q}_r} \| D^{\alpha} f \|^2,
\end{equation*}
\begin{equation*}
\mathcal{Q}_r = \{ \alpha \arrowvert \alpha_j \in \{0,r_j\}, \ \arrowvert \alpha \arrowvert > 0 \} .
\end{equation*}

Lastly we note that the leading order error in Hermite interpolation for
sufficiently smooth functions has a particularly simple form which is the
same in all dimensions. Here for ease of notation we suppose the cell center is
the coordinate origin so that the nodes are $(\pm h/2, \ldots , \pm h/2)$.
Then, using $(x_1, \ldots ,x_d)$ to denote the coordinates and assuming equal
orders in all variables, a multi-point Taylor series expansion yields 
\be
\mathcal{I}_{m, \ldots ,m} f = f - \f {1}{(2m+2)!} \sum_{j=1}^d \left( x_j^2 - \f
{h^2}{4} \right)^{m+1} \f {\pa^{2m+2} f}{\pa x_j^{2m+2}} (0, \ldots ,0)
+ O(h^{2m+3}) .  \label{Imtrunc}
\ee

\section{Dissipative Hermite Method for the Scalar Wave Equation}\label{Diss}

We now describe our dissipative discretization for the scalar wave equation
with constant speed, $c$. 
Here we approximate the wave equation, in $d+1$ dimensions, as a first order system in time for the displacement $u$ and the velocity $v$   
\begin{eqnarray}
\f{\partial u}{\partial t} &=& v, \label{eq:wavepdeFOS1} \\
\f{\partial v}{\partial t} &=& c^2\Delta u + f, \label{eq:wavepdeFOS2}
\end{eqnarray}\label{dvWave}
with initial data
\be
u(x,0) = g_0(x), \ \ v(x,0) = g_1(x). \label{eq:wavepdeFOS3}
\ee
In this work we will consider periodic, homogenous Dirichlet, or
homogeneous Neumann boundary conditions. In one space dimension the method
is essentially equivalent to the original method in \ci{GoodHer} applied to
the $2 \times 2$ system for $(\f {\pa u}{\pa x} , v)$ followed by a reconstruction of $u$, but in multiple space dimensions additional
modifications are proposed to maintain the method's time step stability
properties. 

\subsection{One Space Dimension}
This section outlines the dissipative method in one dimension. A schematic picture of a full time step can be found in Figure \ref{fig:diss_fig}.

  \setlength{\unitlength}{0.7mm}
  \begin{figure}[htb]
    \begin{center}
      \vspace{2.5cm}
      \begin{picture}(130,60)(-10,-30)
        \thicklines
        \matrixput(27.5,2.5)(50,0){2}(0,25){3}{\circle{2}}
        \matrixput(2.5,2.5)(50,0){3}(0,25){3}{\circle*{2}}
        \put(8,0){${\cal I}$}
        \put(15,0){$\rightarrow$}
        \put(58,0){${\cal I}$}
        \put(65,0){$\rightarrow$}
        \put(33,25){${\cal I}$}
        \put(40,25){$\rightarrow$}
        \put(83,25){${\cal I}$}
        \put(90,25){$\rightarrow$}
        \put(42,0){${\cal I}$}
        \put(35,0){$\leftarrow$}
        \put(92,0){${\cal I}$}
        \put(85,0){$\leftarrow$}
        \put(17,25){${\cal I}$}
        \put(10,25){$\leftarrow$}
        \put(67,25){${\cal I}$}
        \put(60,25){$\leftarrow$}
        \put(25,8){${\cal T}$}
        \put(26,15){$\uparrow$}
        \put(75,8){${\cal T}$}
        \put(76,15){$\uparrow$}
        \put(0,33){${\cal T}$}
        \put(1,40){$\uparrow$}
        \put(50,33){${\cal T}$}
        \put(51,40){$\uparrow$}
        \put(100,33){${\cal T}$}
        \put(101,40){$\uparrow$}
        \put(0,-15){$x_{i-1}$}
        \put(25,-15){$x_{i- \frac {1}{2}}$}
        \put(50,-15){$x_{i}$}
        \put(75,-15){$x_{i+ \frac {1}{2}}$}
        \put(100,-15){$x_{i+1}$}
        \put(115,0){$t_n$}
        \put(115,25){$t_{n + \frac {1}{2}}$}
        \put(115,50){$t_{n+1}$}
      \end{picture}
         \caption{A schematic picture of the steps in the  dissipative method. Solid circles represent the primal grid and open circles represent the dual grid. ${\cal I}$ is the Hermite interpolation operator and ${\cal T}$ is the time evolution operator. \label{fig:diss_fig}}
    \end{center}
  \end{figure}
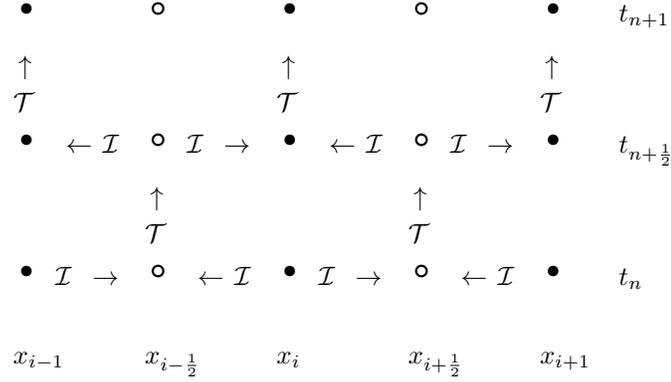

\subsubsection{Initialization}

We start by approximating $u$ and $v$ on the primal grid at the initial time, $t_0$. As discussed above, this amounts to setting
approximate values of derivatives of the functions $g_0$ and $g_1$ at the nodes:
\[
c_{l,i}^0 \approx \f{h_x^l}{l!} \f{d^l g_0}{dx^l} (x_i), \ \ l=0, \ldots m, \ \ i = 0,\ldots,n_x,
\]
\[
d_{l,i}^0 \approx \f{h_x^l}{l!} \f{d^l g_1}{dx^l} (x_i), \ \ l=0, \ldots m-1, \ \ i = 0,\ldots,n_x.
\]
Here the data $c_{l,i}^n$ will define the approximations, $p^n$, to $u$ and $d_{l,i}^n$ the approximations, $q^n$, to $v$.
Ideally we would use exact values of the derivatives but alternatively they can be obtained by interpolation or projection of the initial data. Note that derivatives of order $m$ are used in the approximation of $u$,  higher than for the approximation of $v$. The reason for this choice will be clear from the analysis presented in Section \ref{sec:analysis:diss:1d}.  

\subsubsection{Evolution}
The first step in evolving the approximate solution to the time \mbox{$t =t_0 + \Delta t / 2$} is to construct the piecewise Hermite interpolants of the data (\ref{Hermn}):
\begin{eqnarray*}
&& p_{i +\f{1}{2}}^0(x) = \sum_{l = 0}^{2m+1} c_{l,0,i+\f{1}{2}}^0 \left( \f{x - x_{i+\f{1}{2}} }{h_x}\right)^l, \ \ \forall x \in [x_{i} , x_{i+1}],\\
&& q_{i +\f{1}{2}}^0(x) = \sum_{l = 0}^{2m-1} d_{l,0,i+\f {1}{2}}^0 \left( \f{x - x_{i+\f{1}{2}} }{h_x}\right)^l, \ \ \forall x \in [x_{i} , x_{i+1}],     
\end{eqnarray*}
where the coefficients $c_{l,0,i+\f{1}{2}}$ and $d_{l,0,i+\f{1}{2}}$ are uniquely determined by the interpolation conditions (\ref{Hermcon}). 

The second step is to expand the new polynomials in time 
\bd
p_{i+\f{1}{2}}^0(x,t) = \sum_{l = 0}^{2m+1} \sum_{s = 0}^{\kappa_u(l,m)} c_{l,s,i+\f{1}{2}}^0 \left( \f{x - x_{i+\f{1}{2}} }{h_x}\right)^l \left( \f{t - t_0 }{\Delta t}\right)^s,
\ed
\bd
q_{i+\f{1}{2}}^0(x,t) = \sum_{l = 0}^{2m-1} \sum_{s=0}^{\kappa_v(l,m)} d_{l,s,i+\f{1}{2}}^0 \left( \f{x - x_{i+\f{1}{2}} }{h_x}\right)^l \left( \f{t - t_0 }{\Delta t}\right)^s.
\ed
The coefficients with $s>0$ are obtained by insisting that the approximation satisfy equations (\ref{eq:wavepdeFOS1}) and (\ref{eq:wavepdeFOS2}), and equations obtained by repeated differentiation in space and time of (\ref{eq:wavepdeFOS1}) and (\ref{eq:wavepdeFOS2}). Precisely we ask that at $(x_{i+\f{1}{2}},t_0)$ the approximations satisfy
\begin{eqnarray*}
\f{\pa^{l+s} p_{i+\f{1}{2}}^0}{\pa x^l \pa t^s} & = & \f{\pa^{l+s-1} q_{i+\f{1}{2}}^0}{\pa x^l \pa t^{s-1}},\\
\f{\pa^{l+s} q_{i+\f{1}{2}}^0}{\pa x^l \pa t^s} & = & c^2\f{\pa^{l+s+1} p_{i+\f{1}{2}}^0}{\pa x^{l+2} \pa t^{s-1}} + \f{\pa^{l+s-1} f}{\pa x^{l} \pa t^{s-1}}.
\end{eqnarray*}
Expressing these equations as truncating recursions for the coefficients we obtain
\begin{eqnarray}
c_{l,s,i+\f{1}{2}}^0 & = & \f{\Delta t}{s} d_{l,s-1,i+\f{1}{2}}^0, \label{recursion_diss_1d_1} \\
d_{l,s,i+\f{1}{2}}^0 & = & c^2 \f{(l+2)(l+1)}{s} \f{\Delta t}{h_x^2} c_{l+2,s-1,i+\f{1}{2}}^0 +\f{h_x^l}{l!} \f{\Delta t^s}{s!}\f{\pa^{l+s-1} f}{\pa x^{l} \pa t^{s-1}} . \label{recursion_diss_1d_2}
\end{eqnarray}
The upper limits of the temporal Taylor series, $\kappa_u$ and $\kappa_v$ are chosen so that, for $f=0$, the Taylor series truncate which means that $ \kappa_v(l,m) = 2m-1 - 2 \lfloor l/2 \rfloor,$ and $ \kappa_u(l,m) = \kappa_v(l,m) +1$. Here the notation $\lfloor x \rfloor$ is rounding to the nearest smaller integer ($\lfloor x \rfloor =$ ``floor of $x$''). The fact that the Taylor series truncates means that for $f=0$ the polynomial data is {\it evolved exactly}. Moreover, as long as $c \Delta t \le h_x$, the evolution at the cell center is exact for the piecewise polynomial data, a fact which is exploited in our analysis.    

Once the coefficients above have been computed we find the approximate solution at the dual nodes at the half time step
by simply evaluating the polynomials and their derivatives. The initial data for the next half time step are thus  
\begin{equation}
c_{l,i+1/2}^{1/2} = \f{h_x^l}{l!} \f{\pa^l}{\pa x^l}p_{i+\f{1}{2}}(x_{i+\f{1}{2}},t_{\f{1}{2}}), \ \ \ l = 0,\ldots,m, \label{eq:Taylorsum1}
\end{equation}
\begin{equation}
d_{l,i+1/2}^{1/2} = \f{h_x^l}{l!} \f{\pa^l}{\pa x^l}q_{i+\f{1}{2}}(x_{i+\f{1}{2}},t_{\f{1}{2}}), \ \ \ l =  0,\ldots,m-1. \label{eq:Taylorsum2}
\end{equation}

\subsubsection{Boundary Conditions for the Second Half-step}

To complete a full time step the procedure is repeated, starting with the data (\ref{eq:Taylorsum1}) and (\ref{eq:Taylorsum2}). At the interior nodes\mbox{ $x_i, i = 1,\ldots,n_x-1$} the procedure is the same as above but at the nodes $x_0$ and $x_{n_x}$ we must use the boundary conditions to fill in ghost-polynomials at $x_{-1/2}$ and $x_{n_x+1/2}$. For periodic problems this is straightforward, but for
other boundary conditions this requires additional calculations. 

Fundamentally, we must determine $2(m+1)$ unknowns that specify the polynomial at a boundary, say, at $x=x_0$. Requiring that the degree $(2m+1)$ polynomial centered at $x_0$ interpolates the $(m+1)$ data at $x_{\f{1}{2}}$ yields $(m+1)$ independent linear equations.  The first of the remaining $(m+1)$ independent linear equations can be obtained by requiring that the polynomial coincides with the boundary condition, say $u(0,t) =  g(t)$. Additional independent equations on the even derivatives of $u$ and $v$ can be obtained by differentiating the boundary condition in time together with the PDE. For example $g_{tt}(t) = u_{tt}(0,t) = u_{xx}(0,t)$ and $g_{ttv}(t) = u_{ttv}(0,t) = v_{xx}(0,t)$ and so forth.  

Practically, we ``solve'' these equations by simply extending the polynomial from the interior. For example, if the boundary condition is of Dirichlet type and constant in time we first specify the ghost polynomial so that the coefficients associated with even powers of $x$ of the interpolating polynomial (that is centered at the boundary) are all zero. If the boundary condition is non-zero we then adjust the constant term in the interpolating polynomial to agree with the value of the boundary condition. Neumann conditions are handled similarly but with an even extension of the interior polynomial to the ghost polynomial. 

We note that this approach to enforcing  boundary conditions is often referred to as compatibility boundary conditions and has been used, for example, by Henshaw \cite{henshaw:1730} in the context of solving Maxwell's equations in second order form. 

\subsection{Convergence in One Space Dimension} \label{sec:analysis:diss:1d}

We now analyze the convergence properties of the method described above. We consider $2\pi$-periodic solutions to the wave equation (\ref{eq:wavepdeFOS1})-(\ref{eq:wavepdeFOS2}) with $f=0$, for which the natural energy estimate is
\be
\f{1}{2}\f{d}{dt} \int_{-\pi}^\pi c^2 \left( \f{ \pa u}{\pa x} \right)^2 +v^2 dx = 0.
\ee
To follow the analysis in \cite{GoodHer} we will consider the above energy in the $| \cdot |_{m} $ semi-norm where the interpolation is stable and dissipative in the sense
\be
c^2 \left\arrowvert \f{ \pa \mathcal{I}_{m} u}{\pa x} \right\arrowvert_{m}^2 + \left\arrowvert \mathcal{I}_{m-1} v \right\arrowvert_{m}^2 \le c^2 \left\arrowvert \f{ \pa u}{\pa x} \right\arrowvert_{m}^2  + \left\arrowvert v \right\arrowvert_{m}^2 . \label{stable1}
\ee
This inequality is a direct consequence of (\ref{Pyth}) as $\left\arrowvert \f {\pa w}{\pa x} \right\arrowvert_m^2
= \left\arrowvert w \right\arrowvert_{m+1}^2$ for any $w(x)$. We note that this inequality is the motivation for approximating $u$
and $v$ by polynomials of differing degree. We also note that the analysis
can be applied to problems with homogeneous Dirichlet and Neumann conditions with compatible initial data by extending them smoothly to periodic problems on larger domains. 

Now, assuming that the time step satisfies $c \Delta t \le h_x$ we can write the method outlined above as an evolution of the interpolants of the basic degrees of freedom. Precisely:   
\begin{description}
\item[i.] Interpolate the initial data $p^0 = \mathcal{I}_{m} g_0$, $q^0 = \mathcal{I}_{m-1} g_1$.
\item[ii.] For $n = 0,\f{1}{2},1,\ldots, $ evolve by half steps
\begin{equation}
\left( \ba{c} p^{n+\f{1}{2}} \\ q^{n+\f{1}{2}} \ea \right) = \left( \ba{cc} \mathcal{I}_{m} & 0 \\ 0 &
\mathcal{I}_{m-1} \ea \right) \mathcal{S} \left( \ba{c} p^{n} \\ q^{n} \ea \right) .
\end{equation} 
\end{description} 
Here $\mathcal{S}$ refers to the exact evolution operators over a half time step. As noted above, for polynomial data, these coincide with the recursion relations (\ref{recursion_diss_1d_1}) and (\ref{recursion_diss_1d_2}) followed by the evaluation of the Taylor series (\ref{eq:Taylorsum1}) and (\ref{eq:Taylorsum2}) as long as the CFL condition $c \Delta t \le h_x$ is satisfied.

We now state and prove our main convergence result for the dissipative method. 
\begin{truth} \label{thm:diss1d}
Let $1 >  c \Delta t / h_x > 0$ and let $T>0$ be fixed. Suppose $g_0 \in H^{2m+1}_{\rm per}$, $g_1 \in H^{2m}_{\rm per}$ then there is a constant $C$, independent of $h_x$, so that
\begin{equation}
\left\| \frac{d}{dx} (u^n-p^n) \right\| + \| v^n-q^n \| \leq 
C h_x^{2m-1} \left(\| \frac{d^{2m+1} g_0 }{dx^{2m+1}} \| + \| \frac{d^{2m}  g_1}{dx^{2m}} \| \right) . 
\label{conv_est_diss}
\end{equation}  
\end{truth}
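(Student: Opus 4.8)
The plan is to run the standard energy-method convergence argument of \cite{GoodHer}, adapted to the wave-equation energy in the order-$m$ seminorm. Write $e_p^n = p^n - \mathcal{I}_m u(\cdot,t_n)$ and $e_q^n = q^n - \mathcal{I}_{m-1} v(\cdot,t_n)$ for the differences between the numerical solution and the interpolated exact solution; the total error is then split as $u^n - p^n = (u^n - \mathcal{I}_m u^n) + (\mathcal{I}_m u^n - p^n)$ and likewise for $v$. The interpolation-error pieces are controlled immediately by Lemma \ref{approx_lemma}: applying \eqref{approx_lemma2} with $s=1$ gives $\|\tfrac{d}{dx}(u^n - \mathcal{I}_m u^n)\| \le C h_x^{2m+1}\|\tfrac{d^{2m+2}}{dx^{2m+2}} u^n\|$ and \eqref{approx_lemma1} (at order $m-1$) gives $\|v^n - \mathcal{I}_{m-1} v^n\| \le C h_x^{2m}\|\tfrac{d^{2m}}{dx^{2m}} v^n\|$, and these are bounded in terms of the data via the (energy-conserving) solution operator of the PDE; both are better than the claimed $h_x^{2m-1}$. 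So the work is in estimating $e_p^n, e_q^n$.

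Next I would derive the evolution equation for $(e_p, e_q)$. Since $\mathcal{S}$ is the exact half-step solution operator and the CFL condition $c\Delta t \le h_x$ guarantees $\mathcal{S}$ acts on piecewise polynomials exactly by the truncating recursions, the numerical update is $(p^{n+1/2}, q^{n+1/2})^T = \mathrm{diag}(\mathcal{I}_m, \mathcal{I}_{m-1})\,\mathcal{S}\,(p^n, q^n)^T$. Meanwhile the exact solution satisfies $(\mathcal{I}_m u^{n+1/2}, \mathcal{I}_{m-1} v^{n+1/2})^T = \mathrm{diag}(\mathcal{I}_m, \mathcal{I}_{m-1})\,(\mathcal{S} + \text{truncation})\,(\mathcal{I}_m u^n, \mathcal{I}_{m-1} v^n)^T$, where the truncation arises because $\mathcal{S}$ applied to the \emph{non-polynomial} interpolant $(\mathcal{I}_m u^n, \mathcal{I}_{m-1} v^n)$ does not reproduce the exact solution; one writes $\mathcal{S}(\mathcal{I}_m u^n) = u^{n+1/2} + (\text{propagated interpolation error over a half step})$. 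Subtracting, $(e_p^{n+1/2}, e_q^{n+1/2})^T = \mathrm{diag}(\mathcal{I}_m, \mathcal{I}_{m-1})\,\mathcal{S}\,(e_p^n, e_q^n)^T + \mathrm{diag}(\mathcal{I}_m, \mathcal{I}_{m-1})\,\tau^n$, with $\tau^n$ a local truncation term. The key energy step is then: in the norm $N(w,z)^2 := c^2|\tfrac{\partial w}{\partial x}|_m^2 + |z|_m^2 = c^2|w|_{m+1}^2 + |z|_m^2$, the operator $\mathcal{S}$ is non-expansive (exact PDE evolution conserves the continuous energy, and since $\mathcal{S}$ acts exactly on the polynomials and the seminorm only sees high derivatives, one checks $N(\mathcal{S}(w,z)) \le N(w,z)$ — this is where the half-step structure and the energy identity enter), and the projection/stability inequality \eqref{stable1} shows $\mathrm{diag}(\mathcal{I}_m,\mathcal{I}_{m-1})$ is non-expansive in $N$ by \eqref{Pyth}. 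Hence $N(e_p^{n+1/2}, e_q^{n+1/2}) \le N(e_p^n, e_q^n) + N(\mathcal{I}_m, \mathcal{I}_{m-1})\tau^n \le N(e_p^n, e_q^n) + C\|\tau^n\|_{\text{seminorm}}$.

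Then I would sum over the $O(T/\Delta t) = O(h_x^{-1})$ half-steps. Starting from $e_p^0 = e_q^0 = 0$ (the initial data is taken to be exactly the interpolant), we get $N(e_p^n, e_q^n) \le \sum_k C\,|\mathcal{I}_m\tau^{k}|_{\text{suitable}} \le (T/\Delta t)\cdot C\max_k(\text{per-step truncation seminorm})$. The per-step truncation is the seminorm of the difference between the exact half-step evolution of the interpolant and the interpolant of the exact half-step evolution, i.e. essentially a spatial interpolation error in the order-$m$ (resp. $m-1$) seminorm; using the Pythagorean inequality \eqref{Pyth} together with the approximation estimates, one bounds $|\mathcal{I}_m \tau^k|_{m+1} \lesssim |u - \mathcal{I}_m u|_{m+1}$-type quantities, and an estimate of the form $|g - \mathcal{I}_m g|_{m+1} \le C h_x \|\tfrac{d^{2m+2}}{dx^{2m+2}} g\|$ (derivable from the Peano-kernel / leading-term formula \eqref{Imtrunc}, or from \eqref{approx_lemma2}) gives a per-half-step contribution of size $C h_x^{2m+?}$ (with the weaker, velocity side $m-1$ contributing $O(h_x^{2m})$). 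Multiplying by $h_x^{-1}$ loses one power, landing at $C h_x^{2m-1}$ — which is exactly the stated rate, and explains why the $v$-space is one degree lower: it is the limiting factor. Finally, combine with the interpolation-error pieces (which are $O(h_x^{2m})$ or better) and bound all high derivatives of $u,v$ at time $t_n$ by $\|\tfrac{d^{2m+1}}{dx^{2m+1}} g_0\| + \|\tfrac{d^{2m}}{dx^{2m}} g_1\|$ using the PDE (each time derivative trades for two space derivatives, and energy conservation keeps the constant uniform in $n \le T/\Delta t$).

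The main obstacle I expect is the precise bookkeeping in the non-expansivity of $\mathcal{S}$ in the seminorm $N$ and the sharp accounting of the local truncation error: one must verify that evolving the piecewise polynomial interpolant exactly over a half step, then interpolating, really is a contraction in $c^2|\cdot|_{m+1}^2 + |\cdot|_m^2$ — this requires that the exact flow of the wave equation is an isometry of this seminorm (true, since differentiating the energy identity $m+1$ times in $x$ gives $\tfrac{d}{dt}(c^2|u|_{m+1}^2 + |v|_{m+1}^2) $ — note the mismatch: the velocity appears at order $m+1$ in the conserved quantity but we measure it at order $m$, so one actually needs the genuine energy $c^2|u|_{m+1}^2 + |v|_m^2$ to be controlled, which forces care about which seminorm is conserved versus merely non-increasing) — and that the interpolation-commutator truncation term is measured in a norm compatible with \eqref{stable1}. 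Getting these seminorm indices to line up consistently, and confirming that the factor lost in summing over $O(h_x^{-1})$ steps is exactly one power of $h_x$ and no more, is the delicate part; everything else is a routine application of Lemma \ref{approx_lemma} and the Pythagorean identity \eqref{Pyth}.
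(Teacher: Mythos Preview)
Your overall architecture is right (stability in the seminorm where \eqref{Pyth} makes the interpolation a contraction, then transfer to $L^2$), but the counting of powers of $h_x$ does not close, and the paper's proof contains a genuine extra idea you are missing.

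The per-step truncation in the \emph{seminorm} is only $O(h_x^m)$, not $O(h_x^{2m})$: by \eqref{approx_lemma2} one has $\arrowvert v-\mathcal{I}_{m-1}v\arrowvert_m \le C h_x^{m}\|v^{(2m)}\|$ and $\arrowvert u-\mathcal{I}_{m}u\arrowvert_{m+1}\le C h_x^{m+1}\|u^{(2m+2)}\|$ (your claimed bound $\arrowvert g-\mathcal{I}_m g\arrowvert_{m+1}\le Ch_x\|g^{(2m+2)}\|$ is off by a factor $h_x^{m}$). Thus your naive recursion $N(e^{n+1/2})\le N(e^n)+O(h_x^{m})$ summed over $O(h_x^{-1})$ half-steps yields only $N(e^n)=O(h_x^{m-1})$, far short of the target. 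You cannot instead run the energy argument directly in $L^2$ (where the truncation \emph{is} $O(h_x^{2m})$), because the interpolation operator is not a contraction there; \eqref{stable1} lives only in the seminorm.

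The paper bridges this gap as follows. It works with the full error $e_u^n=u^n-p^n$, $e_v^n=v^n-q^n$, so that the local truncation is \emph{literally} an interpolation error and hence orthogonal (in the seminorm) to the interpolated propagated error. This turns the contraction inequality into an \emph{identity}
\[
\mathcal{E}_m^{n+1/2}=\mathcal{E}_m^n-(\delta^n)^2+O(h_x^{2m}),
\]
where $(\delta^n)^2$ is the seminorm energy dissipated by interpolation at step $n$. Telescoping then bounds $\sum_n(\delta^n)^2=O(h_x^{2m-1})$. A second, \emph{separate} $L^2$ energy recursion uses \eqref{approx_lemma4} to convert each interpolation loss into $Ch_x^{m}\delta^n$, giving $(\mathcal{E}^{n+1/2})^{1/2}\le(\mathcal{E}^n)^{1/2}+Ch_x^m\delta^n+O(h_x^{2m})$; summing and applying Cauchy--Schwarz, $h_x^m\sum_j\delta^j\le h_x^m\sqrt{n}\bigl(\sum_j(\delta^j)^2\bigr)^{1/2}=O(h_x^{2m-1})$. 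This dissipation-tracking plus Cauchy--Schwarz step is the missing ingredient; without it the seminorm and $L^2$ estimates do not couple at the correct order. (The paper flags this explicitly: ``a direct estimate in terms of $\mathcal{E}_m^n$ does not give a sharp error estimate unless we carefully track the cancellation of errors between time-levels.'')
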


\begin{proof}
We start by defining the local truncation errors $\eta$ and $\xi$
\bd
\left( \ba{c} u^{n+\f{1}{2}} \\ v^{n+\f{1}{2}} \ea \right) = \left( \ba{cc} \mathcal{I}_{m} & 0 \\ 0 &
\mathcal{I}_{m-1} \ea \right) \mathcal{S} \left( \ba{c} u^{n} \\ v^{n} \ea \right) + \left( \ba{c} \eta^n \\
\xi^n, \ea \right) .
\ed
Note that, as the evolution operator $\mathcal{S}$ is exact, the truncation errors are simply the interpolation errors
\bd
\eta^n = u^{n+\f{1}{2}}- \mathcal{I}_{m} u^{n+\f{1}{2}}, \ \ \ \xi^n = v^{n+\f{1}{2}}-\mathcal{I}_{m-1} v^{n+\f{1}{2}} .
\ed
We next define the errors 
\bd
e_u^n = u^{n} - p^{n}, \ \ \ e_v^n = v^{n} - q^{n}, 
\ed
which are governed by the error equations
\be
\left( \ba{c} e_u^{n+\f{1}{2}} \\ e_v^{n+\f{1}{2}} \ea \right) = \left( \ba{cc} \mathcal{I}_{m} & 0 \\ 0 &
\mathcal{I}_{m-1} \ea \right) \mathcal{S} \left( \ba{c} e_u^{n} \\ e_v^{n} \ea \right) + \left( \ba{c} \eta^n \\
\xi^n, \ea \right) , \label{erreq_uv}
\ee
with initial data 
\bd
e_u^0 = g_0 - \mathcal{I}_{m} g_0, \ \ \ e_v^0 = g_1-\mathcal{I}_{m-1} g_1.
\ed

Now, to obtain the estimate in the theorem we will first estimate the energy error in the semi-norm where we can make use of the orthogonality (\ref{Pyth}). We start by differentiating $e_u^{n+\f{1}{2}}$ $m+1$ times and $e_v^{n+\f{1}{2}}$ $m$ times
\be
\left( \ba{c} \f {d^{m+1} e_u^{n+\f{1}{2}}}{dx^{m+1}} \\ \f {d^m e_v^{n+\f{1}{2}}}{dx^m} \ea \right) = \left( \ba{cc} \f{d^{m+1}}{dx^{m+1}}
\mathcal{I}_{m} & 0 \\ 0 & \f{d^{m}}{dx^{m}}
\mathcal{I}_{m-1} \ea \right) \mathcal{S} \left( \ba{c} e_u^{n} \\ e_v^{n} \ea \right) + \left( \ba{c} \f{d^{m+1}}{dx^{m+1}} \eta^n \\ \f{d^{m}}{dx^{m}}
\xi^n, \ea \right) . \label{erreq_uv2}
\ee
Recalling that the local truncation errors are interpolation errors we can proceed by taking the $L^2$ norms of (\ref{erreq_uv2}). Then, using the orthogonality  (\ref{Pyth}) we rewrite
\bd
\mathcal{S} \left( \ba{c} e_u^{n} \\ e_v^{n} \ea \right) = \left( \ba{c} \mathcal{S}_u E^n \\ \mathcal{S}_v E^n \ea \right)
 , \ \ E^n = \left( \ba{c} e_u^{n} \\ e_v^{n} \ea \right) , 
\ed
and arrive at 
\begin{eqnarray} 
c^2 \left\arrowvert \f{d e_u^{n+\f{1}{2}}}{dx}  \right\arrowvert^2_{m} + \left\arrowvert e_v^{n+\f{1}{2}} \right\arrowvert_{m}^2  =  c^2 \left\arrowvert \f{d\,  \mathcal{I}_{m} \mathcal{S}_u E^n}{dx}  \right\arrowvert_{m}^2 +  
\left\arrowvert  \mathcal{I}_{m-1} \mathcal{S}_v E^{n} \right\arrowvert_{m}^2  + c^2
\left\arrowvert \f{d \eta^n}{dx}   \right\arrowvert_{m}^2
+ \left\arrowvert \xi^n \right\arrowvert_{m}^2 . \label{erreq_uv3} 
\end{eqnarray}
To proceed we use the identities
\begin{eqnarray*}
\f{d^{m+1}}{dx^{m+1}}  \mathcal{S}_u E^n 
& = & \f{d^{m+1}}{dx^{m+1}}  \mathcal{I}_{m} \mathcal{S}_u E^{n} - 
\f{d^{m+1}}{dx^{m+1}}  (\mathcal{I}_{m}\mathcal{S}_u E^{n} - \mathcal{S}_u E^{n}), \\
\f{d^{m}}{dx^{m}}  \mathcal{S}_v E^{n} 
& = & \f{d^{m}}{dx^{m}}  \mathcal{I}_{m-1} \mathcal{S}_v E^{n} - 
\f{d^{m}}{dx^{m}}  (\mathcal{I}_{m-1}\mathcal{S}_u E^{n} - \mathcal{S}_v E^{n}),
\end{eqnarray*}
together with (\ref{Pyth}) to obtain:
\begin{eqnarray}
c^2 \left\arrowvert \f{d\,  \mathcal{I}_{m} \mathcal{S}_u E^n}{dx}  \right\arrowvert_{m}^2 +  
\left\arrowvert  \mathcal{I}_{m-1} \mathcal{S}_v E^{n} \right\arrowvert_{m}^2 & = &
c^2 \left\arrowvert \f{d\,  \mathcal{S}_u E^n}{dx}  \right\arrowvert_{m}^2 +  
\left\arrowvert  \mathcal{S}_v E^{n} \right\arrowvert_{m}^2 \label{interpdiss} \\
& & - c^2 \left\arrowvert \f{d\,  \left( \mathcal{S}_u E^n -\mathcal{I}_{m} \mathcal{S}_u E^n \right)}{dx}  \right\arrowvert_{m}^2 - \left\arrowvert  \mathcal{S}_v E^{n} -\mathcal{I}_{m-1} \mathcal{S}_v E^n \right\arrowvert_{m}^2 . \nonumber
\end{eqnarray}
Next we introduce the semi-norm energy error
\be
\mathcal{E}_{m}^{n} \equiv c^2 \left\arrowvert \f {d e_u^n}{dx} \right\arrowvert_m^2 + \left\arrowvert e_v^n \right\arrowvert_{m}^2 ,
\ee
and note the fundamental fact that the energy measured in any Sobolev semi-norm is preserved by the exact solution operator $\mathcal{S}$:
\be
c^2 \left\arrowvert \f{d\,  \mathcal{S}_u E^n}{dx}  \right\arrowvert_{m}^2 + \left\arrowvert  \mathcal{S}_v E^{n} \right\arrowvert_{m}^2= \mathcal{E}_{m}^{n} . \label{Energyeq} 
\ee
Now, a direct estimate in terms of  $\mathcal{E}_{m}^{n}$ does not give a sharp error estimate unless we carefully track the cancellation of errors between time-levels. To do so we first introduce $\delta^n$ by rewriting (\ref{erreq_uv3}) using (\ref{approx_lemma2})
\bd
\delta^n = \left( c^2 \left\arrowvert \f{d\,  \left( \mathcal{S}_u E^n -\mathcal{I}_{m} \mathcal{S}_u E^n \right)}{dx}  \right\arrowvert_{m}^2 + \left\arrowvert  \mathcal{S}_v E^{n} -\mathcal{I}_{m-1} \mathcal{S}_v E^n \right\arrowvert_{m}^2 \right)^{1/2}.
\ed
We then use (\ref{interpdiss})-(\ref{Energyeq}) and find:
\bd
\mathcal{E}_{m}^{n+1/2} = \mathcal{E}_{m}^{n} - \left( \delta^n \right)^2 + O(h_x^{2m}).
\ed
To expose telescoping we separate $\left( \delta^n \right)^2$ on the left
\be
\left( \delta^n \right)^2 = \mathcal{E}_{m}^{n} - \mathcal{E}_{m}^{n+1/2}  + O(h_x^{2m}) . \label{telescope}
\ee
Now, summing (\ref{telescope}) to the final time $J=T/\Delta t$ we conclude
\begin{eqnarray}
\sum_{n=0}^{J-1/2} \left( \delta^n \right)^2 & = &  \mathcal{E}_{m}^{0} - \mathcal{E}_{m}^{J}  + O(h_x^{2m-1})
\nonumber \\ & = & O(h_x^{2m-1}) , \label{deltasum}
\end{eqnarray}
so long as the initial conditions are approximated with sufficient accuracy.

To prove the theorem we must connect the estimates in the semi-norm to
$L_2$-estimates. 
Taking one derivative of $e_u^{n+\f{1}{2}}$ we have
\be
\left( \ba{c} \f {d}{dx} e_u^{n+\f{1}{2}} \\ e_v^{n+\f{1}{2}} \ea \right) = \left( \ba{c} \f{d}{dx}
\mathcal{I}_{m} \mathcal{S}_u E^n \\ 
\mathcal{I}_{m-1} \mathcal{S}_v E^n \ea \right) + \left( \ba{c} \f {d \eta^n}{dx} \\ 
\xi^n, \ea \right) . \label{erreq_uvL2}
\ee
Invoking the triangle inequality and (\ref{approx_lemma1}) and defining
\bd
\mathcal{E}^n = c^2 \| \f {d}{dx} e_u^n \|^2 + \| e_v^n \|^2
\ed
we have
\begin{eqnarray*}
\left(\mathcal{E}^{n+\f {1}{2}}\right)^{1/2} & \leq & \left( c^2 \| \f{d}{dx} \mathcal{S}_u E^n \|^2 +
\| \mathcal{S}_v E^n \|^2 \right)^{1/2} +\left( c^2 \| \f{d}{dx} \left( \left( \mathcal{I}_{m} \mathcal{S}_u
-\mathcal{S}_u \right) E^n \right) \|^2 + \| \left( \mathcal{I}_{m-1} \mathcal{S}_v - \mathcal{S}_v \right) E^n \|^2 \right)^{1/2} \\ & & + O(h_x^{2m}) .
\end{eqnarray*} 
Now using the fact that $\mathcal{S}$ preserves the energy and invoking (\ref{approx_lemma4})
\bd
\left(\mathcal{E}^{n+\f {1}{2}}\right)^{1/2} \leq \left(\mathcal{E}^{n}\right)^{1/2} + Ch^{m} \delta^n + O(h_x^{2m}).
\ed
We sum and make use of (\ref{deltasum})
\begin{eqnarray*}
\left(\mathcal{E}^{n+\f {1}{2}}\right)^{1/2} & \leq & Ch^{m} \sum_{j=0}^{n} \delta^j + O(h_x^{2m-1}) \\
& \leq & Ch^{m} \cdot \sqrt{n} \left( \sum_{j=0}^n \left( \delta^j \right)^2 \right)^{1/2} + O(h_x^{2m-1}) \\
& = & O(h_x^{2m-1}) .
\end{eqnarray*}
Finally noting that the truncation error terms driving the estimate are proportional to
\bd
c^2 \| \f {d^{2m+1} u}{dx^{2m+1}} \| + \| \f {d^{2m} v}{dx^{2m}} \|
\ed
and by the energy equality these can be bounded by their initial values 
the estimate (\ref{conv_est_diss}) follows. 
\end{proof}

\subsection{Two Space Dimensions} \label{sec:original_diss}

The straightforward extension of the method described above to two (or higher) space dimensions would be to use the extension of the one dimensional recursion relation on
tensor product interpolants based on derivative data of order $m$ in each variable to approximate $u$ and of order
$m-1$ in each variable to approximate $v$. Unfortunately such a method does not achieve stability at the full geometric CFL condition, $c \Delta t/h_{x,y} < 1$. Fortunately, we have found that a slight modification of the computation of the first time derivatives results in a method which does achieve stability at the full CFL and displays optimal convergence in the energy norm. 

The motivation for the modified method is the control of the multidimensional semi-norm in which Hermite interpolation is a projection. However, 
the modified method described below does not result in exact evolution of polynomial initial data, which is the second main ingredient in our error analysis. Therefore a more involved analysis would be required, and to date we have not been able to carry it out. 

The recursion relations for computing the time derivatives are a straightforward generalization of the one dimensional case and read as follows (for brevity we consider the case $f=0$ and suppress the indices of the space-time grid) 
\begin{eqnarray}
 c_{k,l,s} &=& \f{\Delta t}{s} d_{k, l, s-1}, \label{eq:recursion_ut} \\ 
d_{k, l, s} &=& 
c^2\f{(k+2)(k+1)}{s} \f{\Delta t}{h_x^2} c_{k+2, l, s-1} + 
c^2\f{(l+2)(l+1)}{s} \f{\Delta t}{h_y^2} c_{k,l+2,  s-1}. \label{eq:recursion_vt} 
\end{eqnarray}

Now, the local degrees of freedom representing $v$ and $u$ at each node are $m^2$ and $(m+1)^2$ coefficients of node centered tensor product polynomials of degree $m-1$ and $m$. In order for the two dimensional method to be stable with a  time step $\Delta t = {\rm CFL} \min(h_x,h_y)$ where ${\rm CFL} \approx 1$ we have found that it is necessary to start up the recursion with the following first step 
\begin{equation*}
\begin{array}{ll} 
  c_{k,l,1} = \Delta t\, d_{k, l,0}, & k,l = 0,\ldots,2m-1,\\ 
  d_{k, l, 1} = 
c^2(k+2)(k+1) \f{\Delta t}{h_x^2} {c}^X_{k+2, l} + 
c^2(l+2)(l+1) \f{\Delta t}{h_y^2} c^Y_{k,l+2}, & k,l = 0,\ldots,2m-1. 
\end{array}
\end{equation*}
Here ${c}^X_{k, l}$ are the $(2m+2) \times (2m)$ coefficients of $\mathcal{I}_{m,m-1} u$ and  
${c}^Y_{k, l}$ are the $(2m) \times (2m+2)$ coefficients of $\mathcal{I}_{m-1,m} u$. 
For the remaining coefficients $s = 2\ldots, 4m+3$ we use (\ref{eq:recursion_ut}) and (\ref{eq:recursion_vt}) with $k,l = 0,\ldots,2m+1$. 

To add up the Taylor series at the cell center we must also compute an approximation to $u$ itself. For this we
use $\mathcal{I}_{m,m}u$. As in the one dimensional case the evaluation of the approximations (now truncated in $k$ and $l$) concludes the first half-step. 

As we are considering flat boundaries we can apply the one dimensional odd/even extension of ghost polynomials to enforce Dirichlet and Neumann boundary conditions.

The motivation for the use of the special interpolants in the startup step is again the basic inequality for the conserved
energy which follows from (\ref{Pyth}):
\begin{eqnarray}
c^2 \left\arrowvert \f{ \pa \mathcal{I}_{m,m-1} u}{\pa x} \right\arrowvert_{m,m}^2 + c^2 \left\arrowvert \f{ \pa \mathcal{I}_{m-1,m} u}{\pa y} \right\arrowvert_{m,m}^2 + \left\arrowvert \mathcal{I}_{m-1,m-1} v \right\arrowvert_{m,m}^2  
&  & \nonumber \\ \leq c^2 \left\arrowvert \f{ \pa u}{\pa x} \right\arrowvert_{m,m}^2
+ c^2 \left\arrowvert \f{ \pa u}{\pa y} \right\arrowvert_{m,m}^2 + \left\arrowvert v \right\arrowvert_{m,m}^2 . & & \label{stable2}
\end{eqnarray} 
To use (\ref{stable2}) to establish stability we must supplement it with a separate treatment of functions which are
independent of one of the coordinates. This is a somewhat complex argument which we will not pursue here. In addition we have not been able to generalize the convergence estimate (\ref{conv_est_diss}). However, in all  numerical experiments (see \S \ref{sec:num_exp}) we have carried out we do observe convergence at or above the order predicted by (\ref{conv_est_diss}) as well as the ability to march in time at a CFL number one independent of order. 

We note that although the method involves four Hermite interpolants, a proper organization of the calculation based
on viewing $\mathcal{I}_{m,m-1}u$, $\mathcal{I}_{m-1,m}u$ and $\mathcal{I}_{m,m}u$ as corrections to
$\mathcal{I}_{m-1,m-1}u$ shows that for $m$ not small the cost is not much more than the cost of computing two
Hermite interpolants. 

\subsection{Truncation Error}

To better understand the error behavior of the method as well as its dispersion/dissipation properties it is useful
to examine the truncation error to leading order. To that end we use (\ref{Imtrunc}) as well as a straightforward
generalization to the mixed order interpolants used when $d>1$:
\begin{eqnarray}
\f {\pa^2 \mathcal{I}_{m, m-1} f(x,y)}{\pa x^2} & \approx \f {\pa^2 f(x,y)}{\pa x^2} & - \f {1}{(2m+2)!} \f {\pa^2}{\pa x^2} \left( x^2 - \f {h^2}{4} \right)^{m+1} \f {\pa^{2m+2} f}{\pa x^{2m+2}} (0, 0) \nonumber \\ & & 
- \f {1}{(2m)!} \left( y^2- \f {h^2}{4} \right)^m
\f {\pa^{2m+2} f}{\pa x^2 \pa y^{2m}} (0,0) , \label{Imtrunc2x} \\
\f {\pa^2 \mathcal{I}_{m-1, m} f(x,y)}{\pa y^2} & \approx \f {\pa^2 f(x,y)}{\pa y^2} & - \f {1}{(2m+2)!} \f {\pa^2}{\pa y^2}
\left( y^2 - \f {h^2}{4} \right)^{m+1} \f {\pa^{2m+2} f}{\pa y^{2m+2}} (0, 0) \nonumber \\ & &
- \f {1}{(2m)!} \left( x^2 - \f {h^2}{4}
\right)^m \f {\pa^{2m+2} f}{\pa x^{2m} \pa y^2} (0,0). \label{Imtrunc2y}
\end{eqnarray}
Again we assume the cell center is the origin. Writing $c \Delta t = \lambda h$ the exact update formulas are
\begin{eqnarray*}
u(0,0,\Delta t/2) & = & \sum_{j=0}^{m+1} \f {\left( \lambda \f {h}{2} \right)^{2j}}{(2j)!} \nabla^{2j} u(0,0,0) +
c^{-1} \sum_{j=0}^m \f {\left( \lambda \f {h}{2} \right)^{2j+1}}{(2j+1)!} \nabla^{2j} v(0,0,0) + O(h^{2m+3}) , \\
v(0,0,\Delta t/2) & = & c \sum_{j=1}^{m+1} \f {\left( \lambda \f {h}{2} \right)^{2j-1}}{(2j-1)!} \nabla^{2j} u(0,0,0) +
\sum_{j=0}^m \f {\left( \lambda \f {h}{2} \right)^{2j}}{(2j)!} \nabla^{2j} v(0,0,0) + O(h^{2m+2}) .
\end{eqnarray*}
Substituting (\ref{Imtrunc}) as well as (\ref{Imtrunc2x})-(\ref{Imtrunc2y}) into the update formulas for the
discrete solutions $u^{1/2}$, $v^{1/2}$ we derive an expression for the truncation errors to leading order. To
simplify these we introduce:
\bd
L^k u \equiv \f {\pa^k u}{\pa x^k} (0,0,0) + \f {\pa^k u}{\pa y^k}(0,0,0), \ \ M^k u \equiv 
\f {\pa^k u}{\pa x^{k-2} \pa y^2} (0,0,0) 
+ \f {\pa^k u}{\pa x^2 \pa y^{k-2}} (0,0,0).
\ed
Then 
\begin{eqnarray*}
u(0,0,\Delta t/2) - u^{1/2}(0,0) & = & \f {1}{(2m+2)!} L^{2m+2} u \sum_{j=0}^{m+1} \f {\left( \lambda \f {h}{2} \right)^{2j}}{(2j)!}
(2j)! \left( \ba{c} m+1 \\ j \ea \right) \left( - \f {h^2}{4} \right)^{m+1-j} \\ & & + \f {1}{(2m)!} M^{2m+2} u
\sum_{j=1}^{m+1} \f {\left( \lambda \f {h}{2} \right)^{2j}}{(2j)!} (2j-2)! \left( \ba{c} m \\ j-1 \ea \right) \left( - \f {h^2}{4} \right)^{m+1-j} \\ & & + \f {c^{-1}}{(2m)!} L^{2m}v   \sum_{j=0}^m \f {\left( \lambda \f {h}{2} \right)^{2j+1}}{(2j+1)!} (2j)! \left( \ba{c} m \\ j \ea \right) \left( - \f {h^2}{4} \right)^{m-j} + O(h^{2m+3}) , \\
v(0,0,\Delta t/2) - v^{1/2}(0,0) & = & \f {1}{(2m+2)!} L^{2m+2}u  \sum_{j=1}^{m+1} \f {\left( \lambda \f {h}{2} \right)^{2j-1}}{(2j-1)!} (2j)! \left( \ba{c} m+1 \\ j \ea \right) \left( - \f {h^2}{4} \right)^{m+1-j} \\ & & + \f {1}{(2m)!}
M^{2m+2}u  \sum_{j=1}^{m+1} \f {\left( \lambda \f {h}{2} \right)^{2j-1}}{(2j-1)!} (2j-2)! \left( \ba{c} m \\ j-1 \ea \right) \left( - \f {h^2}{4} \right)^{m+1-j} \\ & & + \f {1}{(2m)!} L^{2m} v \sum_{j=0}^m \f {\left( \lambda \f {h}{2} \right)^{2j}}{(2j)!} (2j)! \left( \ba{c} m \\ j \ea \right) \left( - \f {h^2}{4} \right)^{m-j} + O(h^{2m+2}) .
\end{eqnarray*}
To simplify these expressions introduce some notation
and apply the binomial formula to find
\begin{multline} 
u(0,0,\Delta t/2) - u^{1/2}(0,0)  \\
= \lambda \f {h}{2} \left( \alpha h^{2m+1} L^{2m+2}u + \beta h^{2m+1} M^{2m+2} u
+c^{-1} \gamma h^{2m} L^{2m} v + O(h^{2m+2}) \right), \label{utrunc} 
\end{multline}
\begin{multline} 
v(0,0,\Delta t/2) - v^{1/2}(0,0) \\  = \lambda \f {h}{2} \left( c \eta h^{2m} L^{2m+2}u +  c \gamma h^{2m} M^{2m+2} u + 
\nu h^{2m-1} L^{2m}v + O(h^{2m+1}) \right), \label{vtrunc} 
\end{multline}
where
\bd
\nu = \f {2^{-(2m-1)}}{(2m)!} \f {\left( \lambda^2-1 \right)^m}{\lambda} , \ \ \alpha=\f {\lambda^2-1}{4(2m+2)(2m+1)} \nu,  \ \
\eta = \f {\lambda}{4m+2} \nu,
\ed
\bd
\beta= \f {2^{-(2m+2)}}{(2m)!} \sum_{j=0}^m (j+1)^{-1} (2j+1)^{-1} \lambda^{2j+1} (-1)^{m-j}
\left( \ba{c} m \\ j \ea \right),  \ \  
\gamma  =  \f {2^{-2m}}{(2m)!} \sum_{j=0}^m (2j+1)^{-1} \lambda^{2j} (-1)^{m-j} \left( \ba{c} 
m \\ j \ea \right) .
\ed 

From (\ref{utrunc})-(\ref{vtrunc}) we make the following predictions which will be verified in the numerical
experiments:
\begin{description}
\item[i.] For $\lambda < 1$ the third term in (\ref{vtrunc}) dominates the error and we expect convergence at order $2m-1$, exactly as proved for the energy when $d=1$.
\item[ii.] For $\lambda \rightarrow 1$ the second term in (\ref{vtrunc}) dominates the error and the convergence rate will approach $2m$.
\end{description} 

\section{Conservative Hermite Methods for the Wave Equation}\label{Cons}

In contrast with the dissipative method, our conservative method only uses the variable $u$, directly approximating
\begin{equation}
\f {\pa^2 u}{\pa t^2} = c^2 \nabla^2 u +f . \label{wavequ}
\end{equation}

The starting point for the conservative schemes is the Taylor expansion in time of $u(x,t)$ around \mbox{$t=t_n + \f{\Delta t}{2}$} and \mbox{$t=t_n - \f{\Delta t}{2}$}, i.e.
\bd
u(x, t \pm \f{\Delta t}{2}) = \sum_{r=0} \f{1}{r!} \left(\pm \f{\Delta t}{2}\right)^{r} \f{\pa^{r} u(x, t_n)}{\pa t^{r}}.
\ed
Adding the equations we find 
\be \label{eq:update}
u(x,t+\f{\Delta t}{2}) + u(x,t-\f{\Delta t}{2}) = 2 \sum_{l=0} \f{1}{2l!} \left( \f{\Delta t}{2}\right)^{2l} \f{\pa^{2l} u(x,t_n)}{\pa t ^{2l}} = 2 \sum_{l=0} \f{1}{2l!} \left( \f{c \Delta t}{2}\right)^{2l} \nabla^{2l} u(x,t_n),
\ee
where we have used equation (\ref{wavequ}) with $f=0$ to replace time derivatives with derivatives in $x$. With these formulas we pursue a Hermite-based generalization of high-order conservative time-stepping schemes
based on the so-called modified equation approach; see, for example, \ci{henshaw:1730} for an example in the finite difference context.

  \setlength{\unitlength}{0.7mm}
  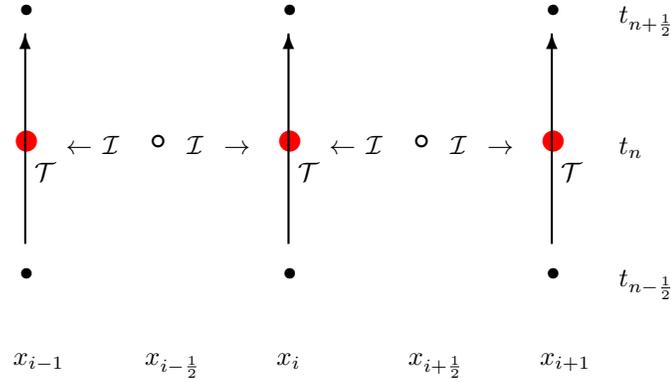
\begin{figure}[htb]
    \begin{center}
      \vspace{2.5cm}
      \begin{picture}(130,60)(-10,-30)
        \thicklines
        \matrixput(27.5,27.5)(50,0){2}(0,50){1}{\circle{2}}
        \matrixput(2.5,2.5)(50,0){3}(0,50){2}{\circle*{2}}
        \matrixput(2.5,27.5)(50,0){3}(0,50){1}{\color{red}{\circle*{4}}}


        \put(33,25){${\cal I}$}
        \put(40,25){$\rightarrow$}
        \put(83,25){${\cal I}$}
        \put(90,25){$\rightarrow$}
        \put(17,25){${\cal I}$}
        \put(10,25){$\leftarrow$}
        \put(67,25){${\cal I}$}
        \put(60,25){$\leftarrow$}
        \put(4,20){${\cal T}$}
        \put(2.25,8){\vector(0,1){40}}

        \put(54,20){${\cal T}$}
        \put(52.25,8){\vector(0,1){40}}
        \put(104,20){${\cal T}$}
        \put(102.25,8){\vector(0,1){40}}

        \put(0,-15){$x_{i-1}$}
        \put(25,-15){$x_{i- \frac {1}{2}}$}
        \put(50,-15){$x_{i}$}
        \put(75,-15){$x_{i+ \frac {1}{2}}$}
        \put(100,-15){$x_{i+1}$}
        \put(115,0){$t_{n - \frac {1}{2}}$}
        \put(115,25){$t_{n }$}
        \put(115,50){$t_{n+ \frac {1}{2}}$}
      \end{picture}
         \caption{A schematic picture of the steps in the conservative method. Solid black circles represent the primal grid and open circles represent the dual grid. The red circles represent the interpolated solution where the equations are enforced. ${\cal I}$ is the Hermite interpolation operator and ${\cal T}$ is the time evolution operator. \label{fig:cons_fig}}
    \end{center}
  \end{figure}

\subsection{The Method in One Space Dimension}

Using the same grids and degrees-of-freedom, $c_{k,i}^n$, as in the previous section,
as well as the Hermite interpolant at the current time level
\bd
p_i^{n}(x) = \sum_{l=0}^{2m+1} c_{l,0,i}^n \left( \f {x-x_i}{h_x} \right)^l
\ed
we may employ the update formula (\ref{eq:update}) to find $c_{k,i}^{n+1/2}$
\be
c_{k,i}^{n+\f{1}{2}} + c_{k,i}^{n-\f{1}{2}} = 2 \sum_{l=0}^{m - \lfloor \f{k}{2} \rfloor } \left( \f{c\Delta t}{2h_x}\right)^{2l} c_{2l+k,0,i}^{n} \f{(2l+k)!}{(2l)! \,k!}, \ \ k = 0,\ldots,m. \label{eq:convUpdate} 
\ee
Here the truncation of the sum guarantees that all terms are included and hence the evolution of polynomial data is exact. Thus, under the usual CFL condition $c \Delta t < h_x$, the updated data $c_{k,i}^{n+\f{1}{2}}$ is also exact for the evolved piecewise polynomial approximations, $p_i^n$ and $p_i^{n-1/2}$. Boundary conditions are enforced in the same way as for the dissipative method. A schematic of the grids used and the process for taking one step can be found in Figure \ref{fig:cons_fig}.

\subsection{Convergence in One Space Dimension}

Assuming periodic boundary conditions we write the method as:
\begin{equation}
p^{n+1/2} = \mathcal{I} \left(\mathcal{S}_{+} p^{n} + \mathcal{S}_{-} p^{n} \right) -p^{n-1/2} . \label{eqn}
\end{equation} 
The half-step solution operators $\mathcal{S}_{\pm}$ are defined by:
\bd
\mathcal{S}_{\pm}w (x) = w \left( x \pm c \f {\Delta t}{2} \right),
\ed
and as above $\mathcal{I}$ denotes both the dual-to-primal and primal-to-dual
degree $2m+1$ Hermite interpolation operators with cell widths $h$ (we are suppressing the suffixes as the interpolants are always degree $2m+1$ for the conservative methods).

Concerning the shift operators $\mathcal{S}_{\pm}$ we note that:
\begin{eqnarray}
\arrowvert \mathcal{S}_{\pm} f \arrowvert_{m+1} & = & \arrowvert f \arrowvert_{m+1} , \label{Sem} \\
\| \mathcal{S}_{\pm} f \| & = & \| f \| , \label{Se}
\end{eqnarray}
The exact solution, $u$, satisfies:
\be
u(x,t+\Delta t/2) = \mathcal{S}_{+}u(x,t)+ \mathcal{S}_{-}u(x,t) - u(x,t-\Delta t/2). \label{exact2level}
\ee
The error, $e^n(x)=u(x,t_n)-p^n(x)$, then satisfies
\begin{eqnarray}
e^{n+1/2} & = & \mathcal{I} \left(\mathcal{S}_{+} e^{n} + \mathcal{S}_{-} e^{n} \right) \label{erreqn} \\
& & + (1-\mathcal{I}) \left(\mathcal{S}_{+} u(\cdot,t_n) + \mathcal{S}_{-} u(\cdot,t_n) \right) - e^{n-1/2} . \nonumber  
\end{eqnarray}

We now introduce conserved variables:
\bd
U_{\pm}(x,t)= u(x,t)-\mathcal{S}_{\pm} u(x,t-\Delta t/2) , 
\ed
and note the equation
\be
U_{\pm}(x,t+\Delta t/2)=\mathcal{S}_{\mp} U_{\pm}(x,t) , \label{Uev}
\ee
holds as a result of (\ref{exact2level}). The equation obviously implies using (\ref{Sem}),(\ref{Se}) that the $L^2$ norm and Sobolev seminorms of $U_{\pm}$ are conserved in time.
Similarly we define
\be
P_{\pm}^n  = p^n  - \mathcal{S}_{\pm} p^{n-1/2} , \label{Vdef}
\ee
and note that, using the fact that $\mathcal{I}p^n = p^n$, and the definition of the method (\ref{eqn}), 
\begin{equation}
P_{\pm}^{n+1/2} = \mathcal{I} \mathcal{S}_{\mp} P_{\pm}^{n} + (\mathcal{I}-1)  S_{\pm} P_{\mp}^{n} . \label{Vev} 
\end{equation} 
Setting
\be
E_{\pm}^n(x) = U_{\pm}(x,t_n) - P_{\pm}^n(x) , 
\ee
note that
\be
E_{\pm}^n = e^n  - \mathcal{S}_{\pm} e^{n-1/2} , 
\label{Eeeq}
\ee
and calculate directly using (\ref{Uev}), (\ref{Vev}):
\begin{eqnarray}
E_{\pm}^{n+1/2} & = & \mathcal{I} \mathcal{S}_{\mp} E_{\pm}^{n} + (\mathcal{I}-1) \mathcal{S}_{\pm} E_{\mp}^{n} \nonumber \\
& & +(1-\mathcal{I}) \left( \mathcal{S}_{+} U_{-}(\cdot,t_{n}) + \mathcal{S}_{-} U_{+}(\cdot,t_{n}) \right) . \label{Eev} 
\end{eqnarray}
Note that the absence of $\pm$ in the last term due to symmetry.

\subsubsection{Energy and error estimates in the Sobolev seminorm}

We first note that just as the norms (and seminorms) of $U_{\pm}$ are conserved by the continuous evolution the sum of the seminorms of
$P_{\pm}^{n}$ are also conserved. 
By the orthogonality property of the Hermite interpolation operator (\ref{orthlem}) and the seminorm-preserving property of
$\mathcal{S}_{\pm}$ we have using (\ref{Vev}) 
\begin{eqnarray}
\left\arrowvert P_{+}^{n+1/2} \right\arrowvert_{m+1}^2 + \left\arrowvert P_{-}^{n+1/2} \right\arrowvert_{m+1}^2 & = &
\left\arrowvert \mathcal{I} \mathcal{S}_{-} P_{+}^{n} \right\arrowvert_{m+1}^2 + 
\left\arrowvert (1-\mathcal{I}) \mathcal{S}_{-} P_{+}^{n} \right\arrowvert_{m+1}^2 \nonumber \\ & & + 
\left\arrowvert \mathcal{I} \mathcal{S}_{+} P_{-}^{n} \right\arrowvert_{m+1}^2 + 
\left\arrowvert (1-\mathcal{I}) \mathcal{S}_{+} P_{-}^{n} \right\arrowvert_{m+1}^2 \nonumber \\ & = &
\left\arrowvert \mathcal{S}_{-} P_{+}^{n} \right\arrowvert_{m+1}^2 + 
\left\arrowvert \mathcal{S}_{+} P_{-}^{n} \right\arrowvert_{m+1}^2 \nonumber \\ & = &
\left\arrowvert P_{+}^{n} \right\arrowvert_{m+1}^2 + \left\arrowvert P_{-}^{n} \right\arrowvert_{m+1}^2 , \label{Vconst}
\end{eqnarray}
with the analogous equality holding at the next half step.

We now consider (\ref{Eev}). For simplicity we define
\be
\Delta^n = \mathcal{S}_{+} U_{-}(\cdot,t_{n}) + \mathcal{S}_{-} U_{+}(\cdot,t_{n}) ,
\ee
Note that
\begin{eqnarray}
U_{\pm} (x,t) & = & u(x,t) - u(x \pm c \f {\Delta t}{2} , t- \f {\Delta t}{2} ) \nonumber \\
& = &  \f {\Delta t}{2} \left( \f {\pa u}{\pa t} (x,t) \pm c \f {\pa u}{\pa x} (x,t) \right) + O (\Delta t^2) .
\end{eqnarray}
Thus for sufficiently smooth solutions $u$ we have
\begin{equation}
\left\arrowvert (1-\mathcal{I}) \Delta^{n} \right\arrowvert_{m+1} \leq C \Delta t \cdot h^{m+1} . \label{truncm} 
\end{equation}
We then have:
\begin{eqnarray}
\left\arrowvert E_{+}^{n+1/2} \right\arrowvert_{m+1}^2 + \left\arrowvert E_{-}^{n+1/2} \right\arrowvert_{m+1}^2 & = &
\left\arrowvert \mathcal{I} \mathcal{S}_{-} E_{+}^{n} \right\arrowvert_{m+1}^2 + 
\left\arrowvert (1-\mathcal{I}) \mathcal{S}_{-} E_{+}^{n} \right\arrowvert_{m+1}^2 \nonumber \\ & & +   
\left\arrowvert \mathcal{I} \mathcal{S}_{+} E_{-}^{n} \right\arrowvert_{m+1}^2 + 
\left\arrowvert (1-\mathcal{I}) \mathcal{S}_{+} E_{-}^{n} \right\arrowvert_{m+1}^2 \nonumber \\ & &
-2 \langle (1-\mathcal{I}) \left( \mathcal{S}_{+} E_{-}^{n} + \mathcal{S}_{-} E_{+}^{n} \right), (1-\mathcal{I}) \Delta^{n} \rangle_{m+1} 
\nonumber \\ & & + 2  \left\arrowvert (1-\mathcal{I}) \Delta^{n} \right\arrowvert_{m+1}^2 \nonumber \\ & \leq &
\left\arrowvert E_{+}^{n} \right\arrowvert_{m+1}^2 + \left\arrowvert E_{-}^{n} \right\arrowvert_{m+1}^2  \label{Eineq} \\ & & 
+ C \left( \left\arrowvert E_{+}^{n} \right\arrowvert_{m+1}^2 + \left\arrowvert E_{-}^{n} \right\arrowvert_{m+1}^2 \right)^{1/2}
\Delta t \cdot h^{m+1} + C \Delta t^2 \cdot h^{2m+2}, \nonumber
\end{eqnarray}
where we have made use of the orthogonality relation (\ref{orthlem}) and the Cauchy-Schwarz inequality.

Defining
\be
\mathcal{E}^{n}_{m+1} = \left\arrowvert E_{+}^{n} \right\arrowvert_{m+1}^2 + \left\arrowvert E_{-}^{n} \right\arrowvert_{m+1}^2 , 
\ee
and summing over all steps we have 
\be
\mathcal{E}^{n}_{m+1} \leq \mathcal{E}^{1/2}_{m+1} + C \Delta t \cdot h^{m+1} \left[ \sqrt{\mathcal{E}^{1/2}_{m+1}} 
+ \sum_{j=1}^{n-1} \left( \sqrt{\mathcal{E}^j_{m+1}} + \sqrt{\mathcal{E}^{j+1/2}_{m+1}} \right) \right] + C t_n \Delta t \cdot h^{2m+2} .
\label{sumEineq} 
\ee
Note that $\mathcal{E}^{1/2}_{m+1}$ is directly determined by the initial data and the first half time step, which we assume has been
determined with sufficient accuracy that
\be
\mathcal{E}^{1/2}_{m+1} \leq C h^{m+1} .
\ee
Setting
\be
\mathcal{M}_{m+1}^{n} = \max_{j \leq n} \mathcal{E}_{m+1}^j ,
\ee
we have
\be
\mathcal{M}_{m+1}^{n} \leq Ct_n h^{m+1} \sqrt{\mathcal{M}_{m+1}^n} + C h^{2m+2} (1+ t_n \Delta t) .
\ee
From this we conclude the fundamental error estimate in the seminorm.

\begin{lemma}\label{semilem}
For $u$ sufficiently smooth there exists $C$ independent of $h$ and $\Delta t$ such that
\be
\mathcal{E}_{m+1}^{n}  \leq Ch^{2m+2} (1+t_n^2) . \label{Ebnd} 
\ee
\end{lemma}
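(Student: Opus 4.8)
The plan is to obtain Lemma~\ref{semilem} directly from the implicit inequality
\bd
\mathcal{M}_{m+1}^{n} \leq C t_n h^{m+1} \sqrt{\mathcal{M}_{m+1}^n} + C h^{2m+2}\,(1 + t_n \Delta t)
\ed
established just above, which has the generic form $X \le a\sqrt{X} + b$ with $X = \mathcal{M}_{m+1}^{n} \ge 0$, $a = C t_n h^{m+1}$ and $b = C h^{2m+2}(1+t_n\Delta t)$. Such inequalities are resolved by the standard quadratic argument: putting $Y = \sqrt{X}$, the bound $Y^2 - aY - b \le 0$ forces $Y \le \frac{1}{2}\left(a + \sqrt{a^2 + 4b}\right) \le a + \sqrt{b}$, and squaring gives $X \le (a+\sqrt{b})^2 \le 2a^2 + 2b$.

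Substituting the values of $a$ and $b$, this reads
\bd
\mathcal{M}_{m+1}^{n} \le 2C^2 t_n^2 h^{2m+2} + 2C h^{2m+2}\,(1 + t_n \Delta t).
\ed
For $n \ge 1$ we have $t_n \Delta t = n \Delta t^2 \le n^2 \Delta t^2 = t_n^2$, so $1 + t_n\Delta t \le 1 + t_n^2$, while for $n = \frac{1}{2}$ the assumed accuracy of the first half step already gives the claim; collecting the numerical constants into a single $C$ yields $\mathcal{M}_{m+1}^{n} \le C h^{2m+2}(1 + t_n^2)$. Since $\mathcal{E}_{m+1}^{n} \le \mathcal{M}_{m+1}^{n}$ by definition of the maximum, this is exactly the estimate~(\ref{Ebnd}), which finishes the proof.

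I do not expect a genuine obstacle at this last stage. The substantive work sits in the discrete energy balance~(\ref{Eineq}): there the projection identity~(\ref{orthlem}) together with the Pythagorean decomposition~(\ref{Pyth}) and the seminorm-invariance~(\ref{Sem}) of the shifts $\mathcal{S}_\pm$ ensure that the interpolation step does \emph{not} increase the sum of seminorms $\mathcal{E}_{m+1}^{n} = \arrowvert E_{+}^{n} \arrowvert_{m+1}^2 + \arrowvert E_{-}^{n} \arrowvert_{m+1}^2$ of the conserved-variable errors, so that the energy can grow only through the interpolation truncation term $(1-\mathcal{I})\Delta^{n}$ --- of size $O(\Delta t\,h^{m+1})$ by~(\ref{truncm}) --- and the cross term, which is absorbed by Cauchy--Schwarz after bounding $\arrowvert (1-\mathcal{I})\mathcal{S}_{\pm} E_{\mp}^n \arrowvert_{m+1} \le \arrowvert E_{\mp}^n \arrowvert_{m+1} \le \sqrt{\mathcal{E}_{m+1}^n}$. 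Were that balance not already in hand, establishing it --- in particular checking that the mixing between $E_{+}$ and $E_{-}$ in~(\ref{Eev}) leaves the summed seminorm undisturbed --- would be the main difficulty; given it, only the elementary quadratic/Gronwall bookkeeping above remains.
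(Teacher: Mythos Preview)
Your proposal is correct and is exactly the elementary closing step the paper leaves implicit when it writes ``From this we conclude the fundamental error estimate in the seminorm'' immediately after the implicit inequality for $\mathcal{M}_{m+1}^n$. The quadratic argument $X \le a\sqrt{X}+b \Rightarrow X \le 2a^2+2b$ is the standard way to resolve that inequality, and your bookkeeping of the $t_n\Delta t$ term and the $n=\tfrac{1}{2}$ case is fine.
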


\subsubsection{Error estimates in $L^2$}

We begin by estimating the $L^2$ norms of the errors for the conserved variables, $E^{n}_{\pm}$. Using (\ref{Eev}) in conjunction with (\ref{approx_lemma1}), (\ref{approx_lemma4}), (\ref{Se}) and (\ref{Ebnd}) we obtain the inequality 
\begin{eqnarray}
\| E_{\pm}^{n+1/2} \| & \leq & \| S_{\mp} E_{\pm}^{n} \| + \| (\mathcal{I}-1) S_{\mp} E_{\pm}^{n} \| + \| (\mathcal{I}-1) S_{\pm} E_{\mp}^{n} \|  + \| (\mathcal{I}-1) \Delta^{n} \| \nonumber \\ & \leq & \| E_{\pm}^{n} \| + Ch^{m+1} \sqrt{\mathcal{E}_{m+1}^{n}}
+C  \Delta t \cdot h^{2m+2} \nonumber \\
& \leq & \| E_{\pm}^{n} \| + C(1+t_n) h^{2m+2} . \label{E2ineq} 
\end{eqnarray}
Assuming the initial data is sufficiently accurate
\be
\| e^{0,1/2} \| \leq Ch^{2m+1} ,
\ee
and that the CFL number is bounded below
\be
c \f {\Delta t}{h} \geq \eta > 0,
\ee
summing (\ref{E2ineq}) yields:
\be
\| E_{\pm}^{n} \| \leq C (1+t_n^2) h^{2m+1} . \label{E2bnd} 
\ee

Now from (\ref{Eeeq}) and (\ref{E2bnd}) we obtain
\begin{equation}
\| e^{n+1/2} \| \leq \| S_{\pm} e^{n} \| + \| E_{\pm}^{n+1/2} \|  \leq \| e^{n} \| + C (1+t_n^2) h^{2m+1} . \label{e2n}   
\end{equation}
Summing (\ref{e2n}) we obtain our main convergence result.

\begin{truth}\label{cconthm}
For $u$ sufficiently smooth there exists $C$ independent of $h$ and $\Delta t \geq c^{-1} \eta h$ such that
\be
\| e^n \| \leq Ch^{2m} (1+t_n^3) . \label{errbnd} 
\ee
\end{truth}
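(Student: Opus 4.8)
The plan is to convert the one-step inequality (\ref{e2n}) into the claimed global bound by summation, tracking carefully how the $(1+t_n^2)$ factor on the right accumulates an extra power of $t_n$. Starting from
\be
\| e^{n+1/2} \| \leq \| e^{n} \| + C(1+t_n^2) h^{2m+1},
\ee
(and the analogous half-step inequality) I would sum from the initial level up to step $n$. Since there are $O(t_n/\Delta t)$ steps and $\Delta t \geq c^{-1}\eta h$, the number of steps is $O(t_n/h)$. Each contributes at most $C(1+t_j^2)h^{2m+1} \leq C(1+t_n^2)h^{2m+1}$, so the accumulated error is bounded by $C (t_n/h)(1+t_n^2) h^{2m+1} = C(1+t_n^3) h^{2m}$, where I have absorbed the initial error $\|e^{0,1/2}\| \leq Ch^{2m+1}$ (which is lower order in the relevant regime) into the constant. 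This gives (\ref{errbnd}).

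More carefully, I would write $\|e^{n}\| \leq \|e^{0}\| + \sum_{j} \left(\|e^{j+1/2}\| - \|e^{j}\|\right) + \cdots$ telescoping through both the integer and half-integer levels, bounding each increment by (\ref{e2n}) and its half-step counterpart, and using $t_j \leq t_n \leq T$ together with the step count $n \leq t_n/\Delta t \leq c t_n/(\eta h)$. The factor $1/h$ from the step count multiplies $h^{2m+1}$ to produce $h^{2m}$, and the $(1+t_n^2)$ from each term times $t_n$ from the count produces the $(1+t_n^3)$ dependence. The hypothesis $\Delta t \geq c^{-1}\eta h$ is exactly what is needed here: it bounds the number of time steps from above in terms of $1/h$, so that accumulating $O(1/h)$ local contributions of size $h^{2m+1}$ loses only one power of $h$. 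The smoothness of $u$ enters only through the earlier lemmas already invoked in deriving (\ref{e2n}).

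I do not anticipate a genuine obstacle here; this is the routine final summation step. The only mild care needed is (i) making sure the telescoping is done consistently over the staggered (integer and half-integer) time levels, so that the constant $C$ does not secretly depend on the number of substeps, and (ii) confirming that the initial-data contribution $\|e^{0,1/2}\|\leq Ch^{2m+1}$ and the $C t_n \Delta t \, h^{2m+2}$-type remainder terms are both dominated by $Ch^{2m}(1+t_n^3)$ under the standing assumption $\Delta t \geq c^{-1}\eta h$ (indeed $\Delta t \leq T$ and $h^{2m+1} \leq h^{2m}$ for $h \leq 1$, so both are harmless). All the substantive work — the orthogonality of the Hermite projection, the conservation of the seminorms of $P_\pm$, the seminorm bound of Lemma \ref{semilem}, and the $L^2$ bound (\ref{E2bnd}) on the conserved-variable errors — has already been carried out, so the proof is short.
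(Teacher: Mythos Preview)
Your proposal is correct and matches the paper's approach exactly: the paper's proof of this theorem is the single sentence ``Summing (\ref{e2n}) we obtain our main convergence result,'' which is precisely the telescoping summation you describe, using $\Delta t \geq c^{-1}\eta h$ to bound the number of half-steps by $O(t_n/h)$ so that accumulating $O(t_n/h)$ increments of size $C(1+t_n^2)h^{2m+1}$ yields $Ch^{2m}(1+t_n^3)$.
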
 

\subsection{The Method in Two Space Dimensions}

Again using (\ref{eq:update}) and, as for the dissipative method, using the notation in (\ref{Hermn2}), (\ref{Hermcon2}),
we derive an explicit and compact update formula for the coefficients (again the spatial indices are suppressed)
\be
c_{k,l}^{n+\f{1}{2}} + c_{k,l}^{n-\f{1}{2}} = \f{2}{k!\, l!} \sum_{r=0}^{2m} \sum_{\{i_x,i_y\} \in \Sigma} (k+2 i_x)! \, (l+2i_y)! \mathcal{P}_{i_x,i_y} c_{k+2i_x,l+2i_y,0}^{n}, \ \ k,l = 0,\ldots,m.
\ee
Here $\Sigma$ is the set of integers that satisfy all of the relations 
\[
i_x + i_y = r, \ \ (k+2i_x) \le (2m+1), \ \ (l+2i_y) \le (2m+1),
\]
or said more plainly, we make sure to not address outside the array holding $c_{(\cdot ,\cdot,0)}^n$. The matrix $\mathcal{P}$ is a scaled version of Pascal's triangle (oriented to start in element $\mathcal{P}_{0,0}$). Precisely, let 

\be
P = \left(
 \begin{array}{rrrrr}
1 & 1 &  1 & 1 &\ldots \\
1 & 2 &  3 & 4 & \ldots \\
1 & 3 &  6 & 10 & \ldots \\
1 & 4 &  10 & \ddots & \ldots \\
\vdots & \vdots & \vdots & \vdots &\ddots 
\end{array}	
\right),
\ee
then 
\[
\mathcal{P}_{i,j} = \f{1}{(2i+2j)!} P_{i,j} \left(\f{c\Delta t}{h_x}\right)^{2i} \left(\f{c\Delta t}{h_y}\right)^{2j},
\]
where the indexation for $i$ and $j$ starts with zero. Obviously, extensions to more space dimensions are straightforward. 

\subsection{Truncation Error}

The truncation error for the conservative method is computed as in the dissipative case. In particular, noting that
truncating the sum in (\ref{eq:update}) at $l=m+1$ produces an error of $O(h^{2m+4})$, setting $c \Delta t = \lambda h$,
and invoking (\ref{Imtrunc}) we find:
\begin{eqnarray}
u(0,0,\Delta t)-p^1(0,0) & = & \left( \f {c \Delta t}{2} \right)^2 \left( h^{2m} \f {2^{-(2m+1)}}{(2m+2)!} \lambda^{-2} L^{2m+2}u
\sum_{l=0}^{m+1} \lambda^{2l} (-1)^{m+1-l} \left( \ba{c} m+1 \\ l \ea \right) + O(h^{2m+2}) \right) \nonumber \\
& = & \left( \f {c \Delta t}{2} \right)^2 \left( h^{2m} \f {2^{-(2m+1)}}{(2m+2)!} \f {(\lambda^2-1)^{m+1}}{\lambda^{2}} L^{2m+2}u
+ O(h^{2m+2}) \right) . \label{Ctrunc}
\end{eqnarray}

From (\ref{Ctrunc}) we predict, as proven for $d=1$, that the convergence rate for the conservative method will be $2m$,
approaching $2m+2$ as $\lambda \rightarrow 1$. This will be verified in the numerical experiments.

\section{Numerical Experiments}\label{sec:num_exp}
\subsection{A Gaussian Pulse in a 1D Box}
The aim of this example is to empirically investigate the rates of convergence of the conservative and dissipative methods as a function of $m$. To do this we solve 
\[
u_{tt}=u_{xx},
\]
on $x \in [-3/2,3/2]$ with the initial data 
\[
u(x,0) = e^{-20x^2}, \ \ v(x,0) = 0,
\]
and with boundary conditions $u(-3/2,t) = 0$, $u(3/2,t) = 0$. The boundary conditions are enforced as described above, that is, if we denote the $m+1$ coefficients of the polynomial approximating $u$ at the leftmost dual gridpoint $x_{1/2} = -3/2+h_x/2$ by $c_l[x_{1/2}]$, we set $c_l[x_{-1/2}] = (-1)^l c_l[x_{1/2}]$. Similarly on the right boundary we set the ``ghost-polynomial'' $c_l[x_{n_x+1/2}] = (-1)^{(l+1)} c_l[x_{n_x-1/2}]$. The boundary conditions for the velocity are handled in the same way.

We solve until time $12+\tau$, $\tau \approx .25$, when the exact solution is
\[
u(x,12+\tau) = \f{1}{2}\left( e^{-20 (x+\tau)^2} + e^{-20 (x-\tau)^2} \right),
\]
and compute the $L_2$-error for the solution evaluated on a sufficiently fine grid. For all $m$ we choose the time step so that  $\Delta t / h_x  = \lambda $. We use a sequence of grid refinements starting with $n_x^{(0)} = 10$ and $n_x^{(i)} = \lceil 1.2 n_x^{(i-1)} \rceil$ and fix $\lambda = 0.8$ and $1.0$. In order to be able to take an integer number of time steps we adjust $\tau$ to be as close to $.25$ as possible.  

The $L^2$-errors for resulting $h_x$ and for the dissipative method are displayed in Figure \ref{fig:diss_ex_1_errors} and the results for the conservative method can be found in Figure \ref{fig:cons_ex_1_errors}. In both figures we display results for $m = 0,\ldots,5$. As can be seen the rates of convergence are $(2m-1)$ and $(2m)$ for the dissipative method for $\lambda = 0.8$ and $1.0$ and $(2m)$ and $(2m+2)$ for the conservative method. These are as predicted by the truncation error analysis given above. 

\begin{figure}[htb]
  \begin{center}
  \includegraphics[width=0.45\textwidth]{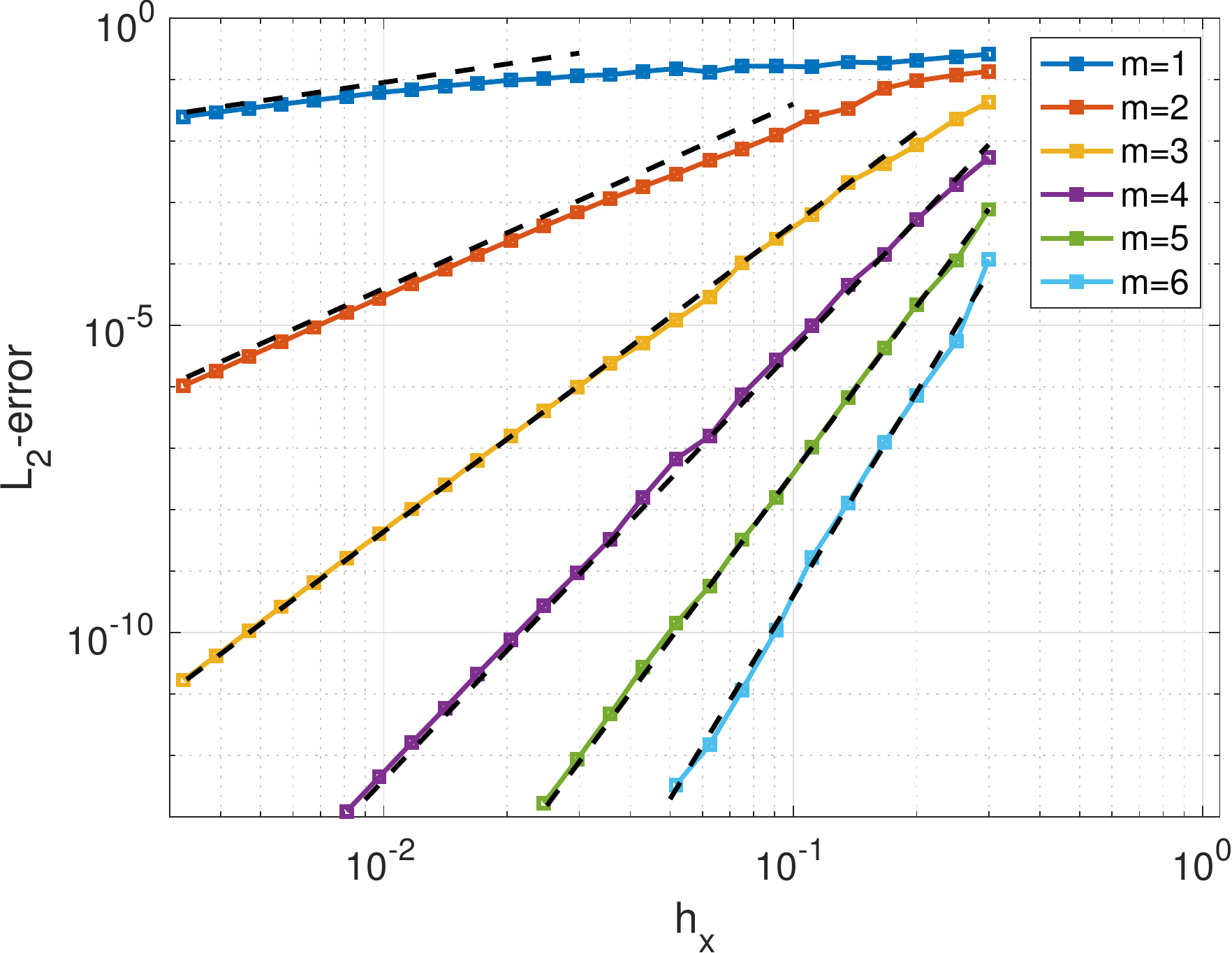}
  \includegraphics[width=0.45\textwidth]{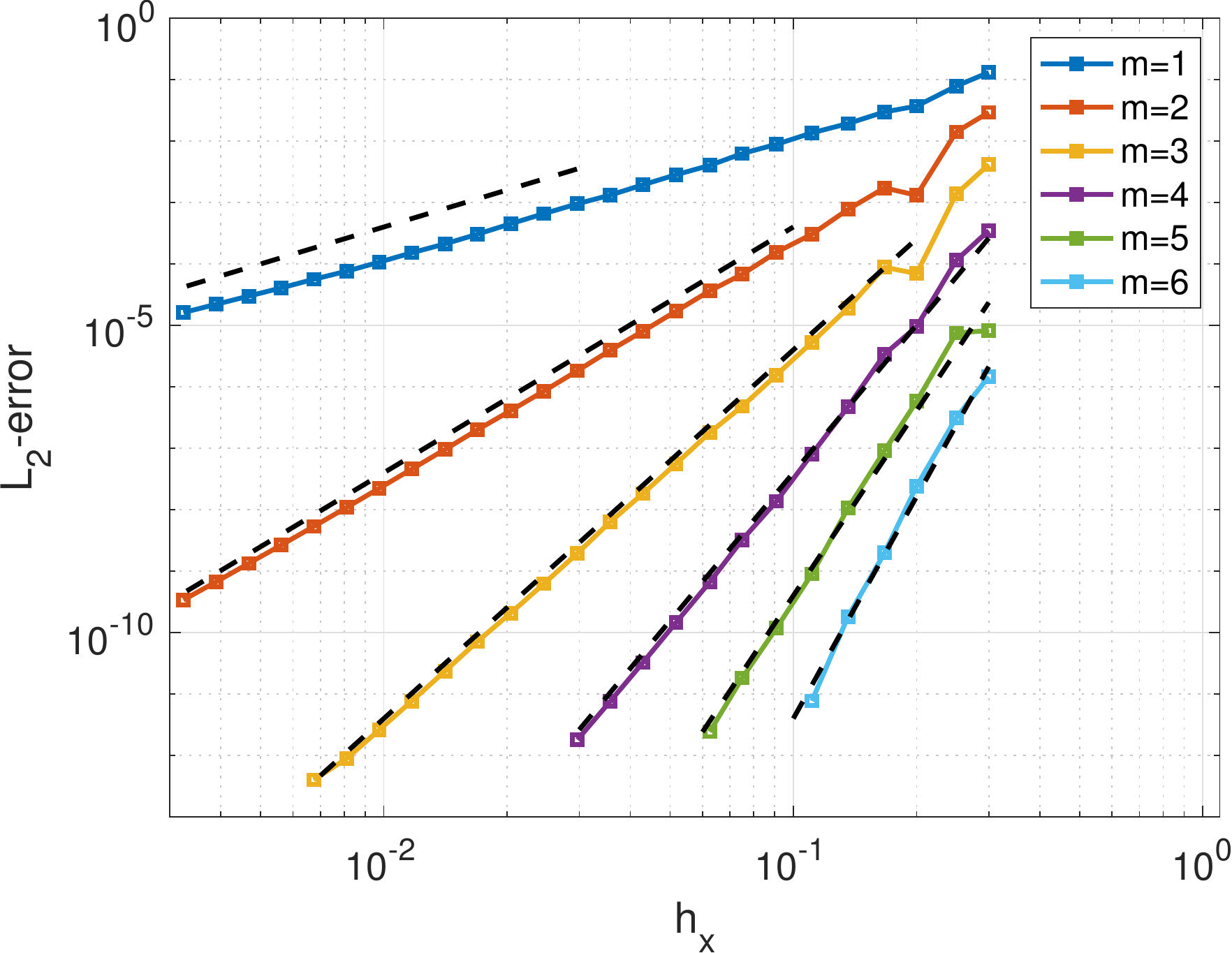}
  \caption{The dissipative method, results are for the example with the Gaussian initial data. Displayed are $L^2$-errors as a function of $h_x$ for $m = 1,\ldots,6$. To the left is for $\lambda = 0.8$ and to the right   $\lambda = 1.0$. The dashed lines are $\sim  h_x^{2m-1}$ to the left and $\sim  h_x^{2m}$ to the right. \label{fig:diss_ex_1_errors}}
  \end{center}
\end{figure}

\begin{figure}[htb]
\begin{center}
  \includegraphics[width=0.45\textwidth]{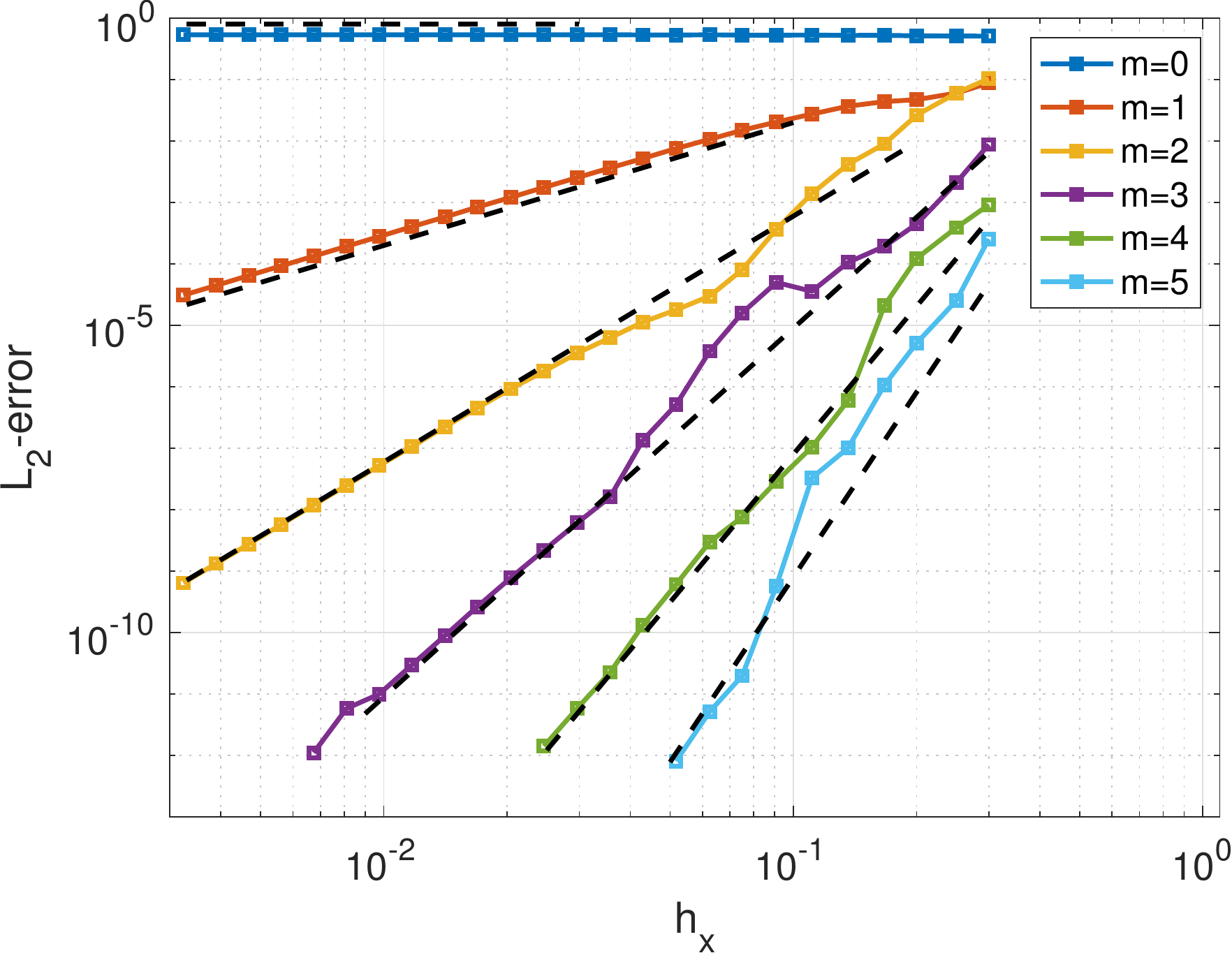}
  \includegraphics[width=0.45\textwidth]{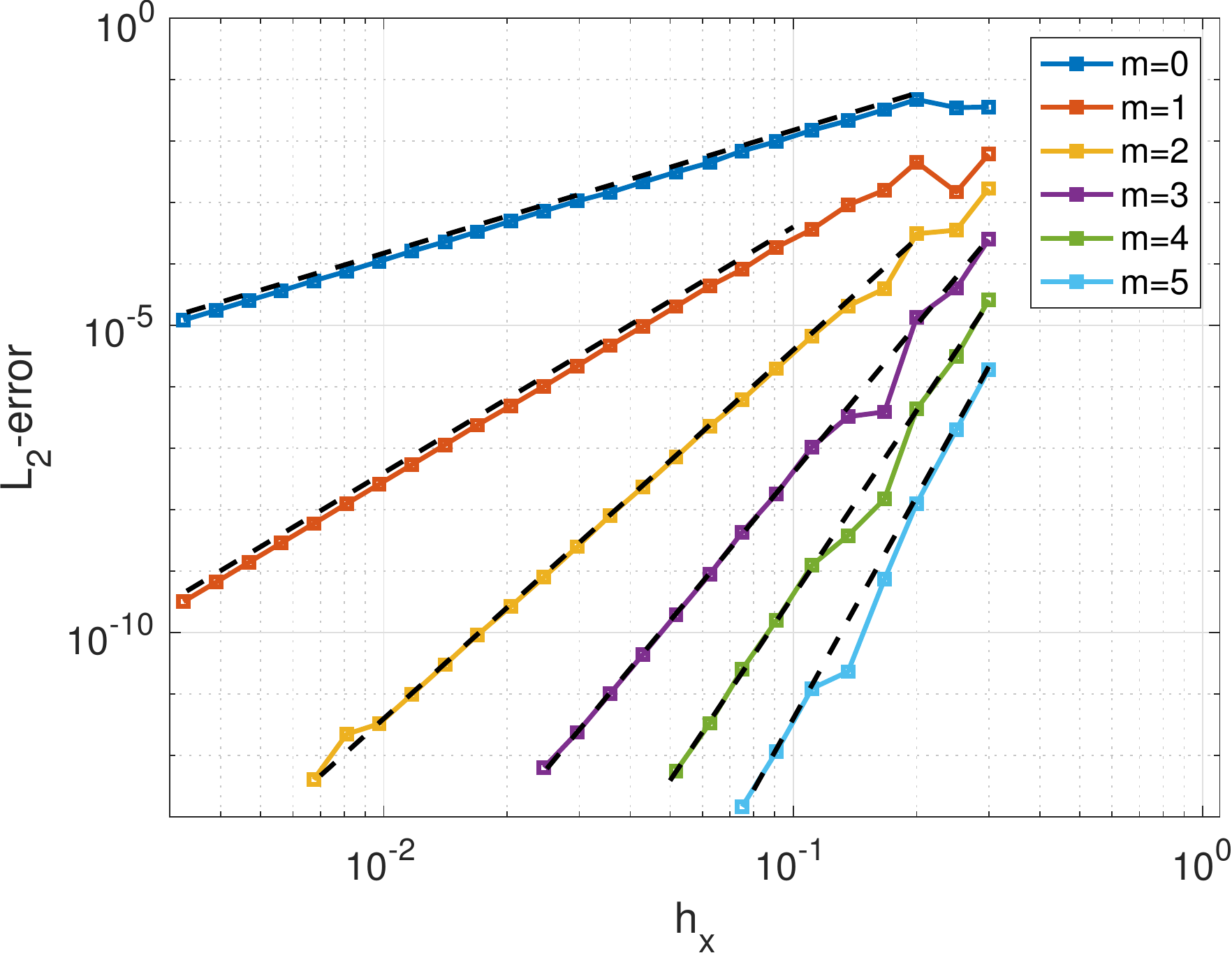}\\
  \caption{The conservative method, results are for the example with the Gaussian initial data. Displayed are $L_2$-errors as a function of $h_x$ for $m = 0,\ldots,5$. To the left is for $\lambda = 0.8$ and to the right   $\lambda = 1.0$. The dashed lines are $\sim  h_x^{2m}$ to the left and $\sim  h_x^{2m+2}$ to the right. \label{fig:cons_ex_1_errors}}
  \end{center}
\end{figure}

\subsection{Numerical Verification of Conservation}
The theory for the conservative method above does not account for roundoff effects and in this experiment we investigate how well the energy 
\[
 \mathcal{E}(t_n) \equiv \left\arrowvert P_{+}^{n} \right\arrowvert_{m+1}^2 + \left\arrowvert P_{-}^{n} \right\arrowvert_{m+1}^2,
 \]
is conserved between time steps. In the Figure \ref{fig:Pp_Pm} we display the $(m+1)$th derivative of $P_{+}$ and $P_{-}$ for the approximation of the exact solution $u(x,t) = \sin(x) \cos(t)$ at time $t = 0$ and on a grid $x\in [-\pi,\pi]$ with $h_x = \frac{2\pi}{3}$ and with $\Delta t = h_x / 2$. 

\begin{figure}[h!]
  \begin{center}
  \smallskip
  \includegraphics[width=0.50\textwidth]{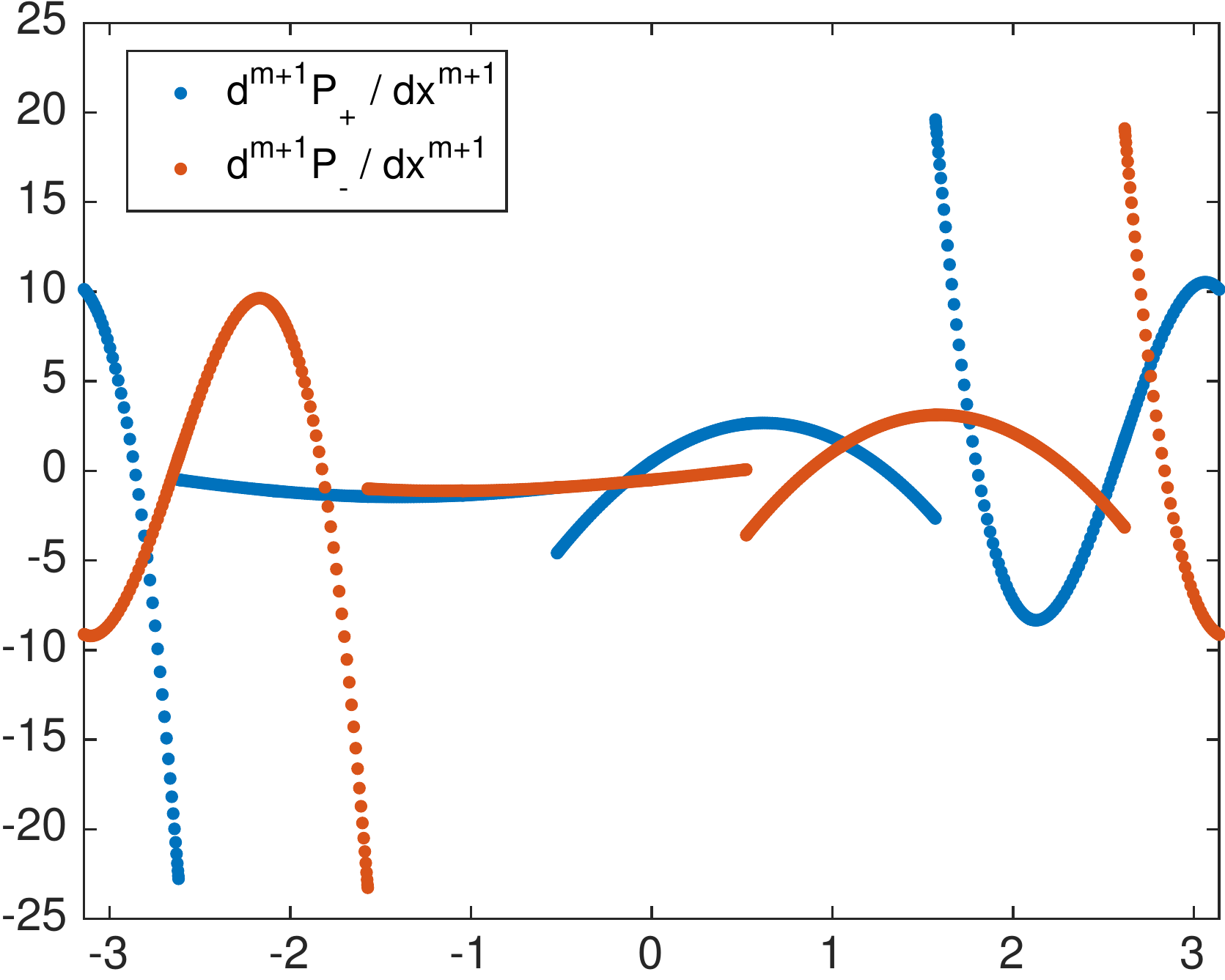}
  \caption{The figure displays the $(m+1)$th derivative of $P_{+}$ and $P_{-}$ for the approximation of the exact solution $u(x,t) = \sin(x) \cos(t)$ at time $t = 0$. \label{fig:Pp_Pm}}
  \end{center}
\end{figure}

As can be seen in the figure the $(m+1)$th derivatives of $P_{\pm}$ are discontinuous and displaced relative to the element centers. As the curves are piecewise smooth we (for simplicity) fix $\Delta t = h_x / 2$ which allows the sub-division of each element into four parts of size $h_x/4$ where the curves are continuous polynomials that can be integrated exactly (up to machine precision effects). 

In Figure \ref{fig:conservationsmooth_and _rand} we display the difference
in $\mathcal{E}$ at the initial time and subsequent time steps (we display every 100th time step). Two initial data are considered, the exact solution $u(x,t) = \sin(x) \cos(t)$, and random initial data. For the latter the initial polynomial coefficients are chosen randomly from a uniform distribution between 0 and 1. We use $h_x = 2\pi/ 30$ and monitor $\mathcal{E}$ for one million time steps for $m = 1,3,5$. In the first case the change in $\mathcal{E}$ is relatively small although the size of the difference is increasing with increasing $m$. For the second set of initial data the difference of $\mathcal{E}$ from its initial value increases more rapidly in $t$.

\begin{figure}[htb]
  \begin{center}
  \smallskip
  \includegraphics[width=0.32\textwidth]{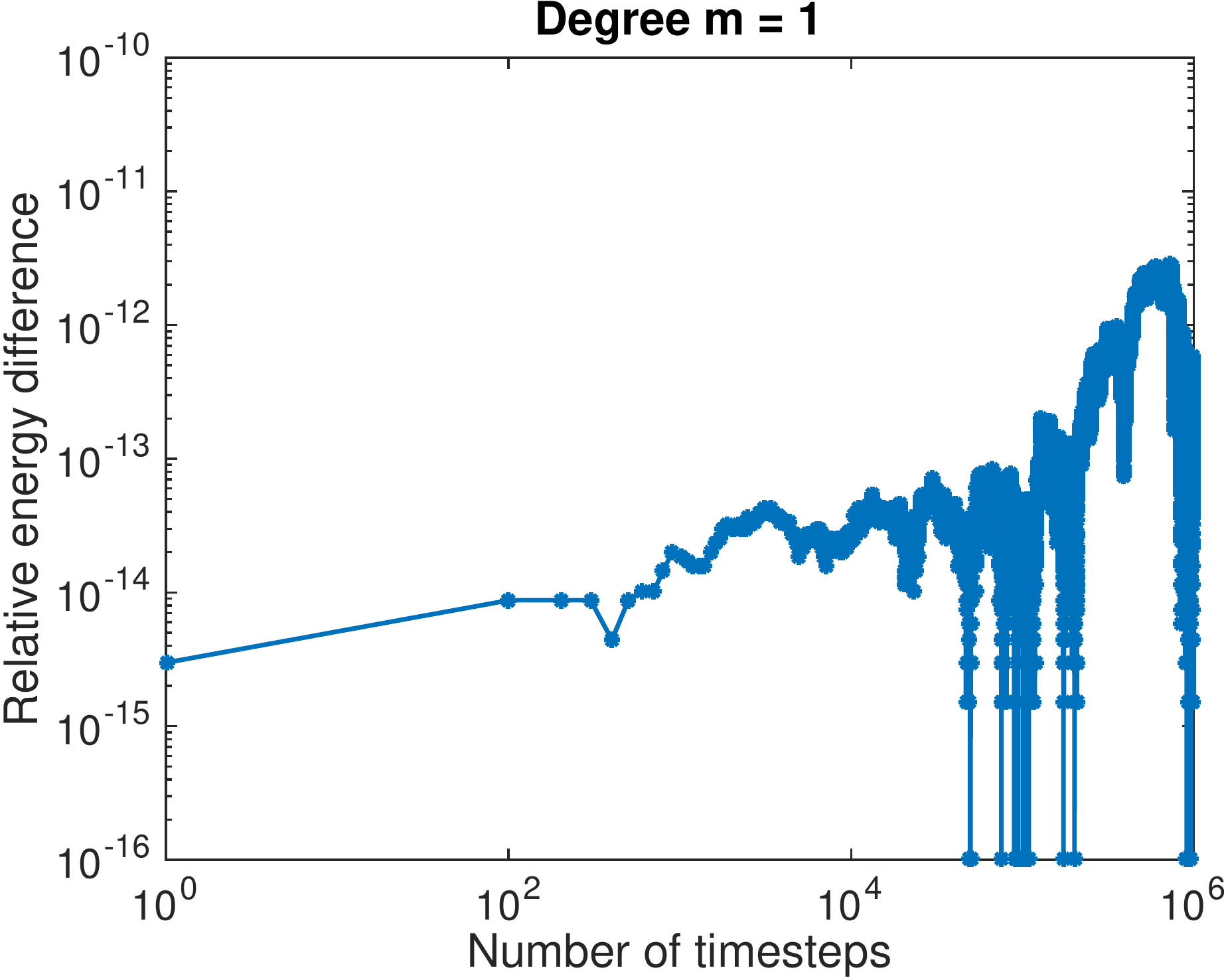}
  \includegraphics[width=0.32\textwidth]{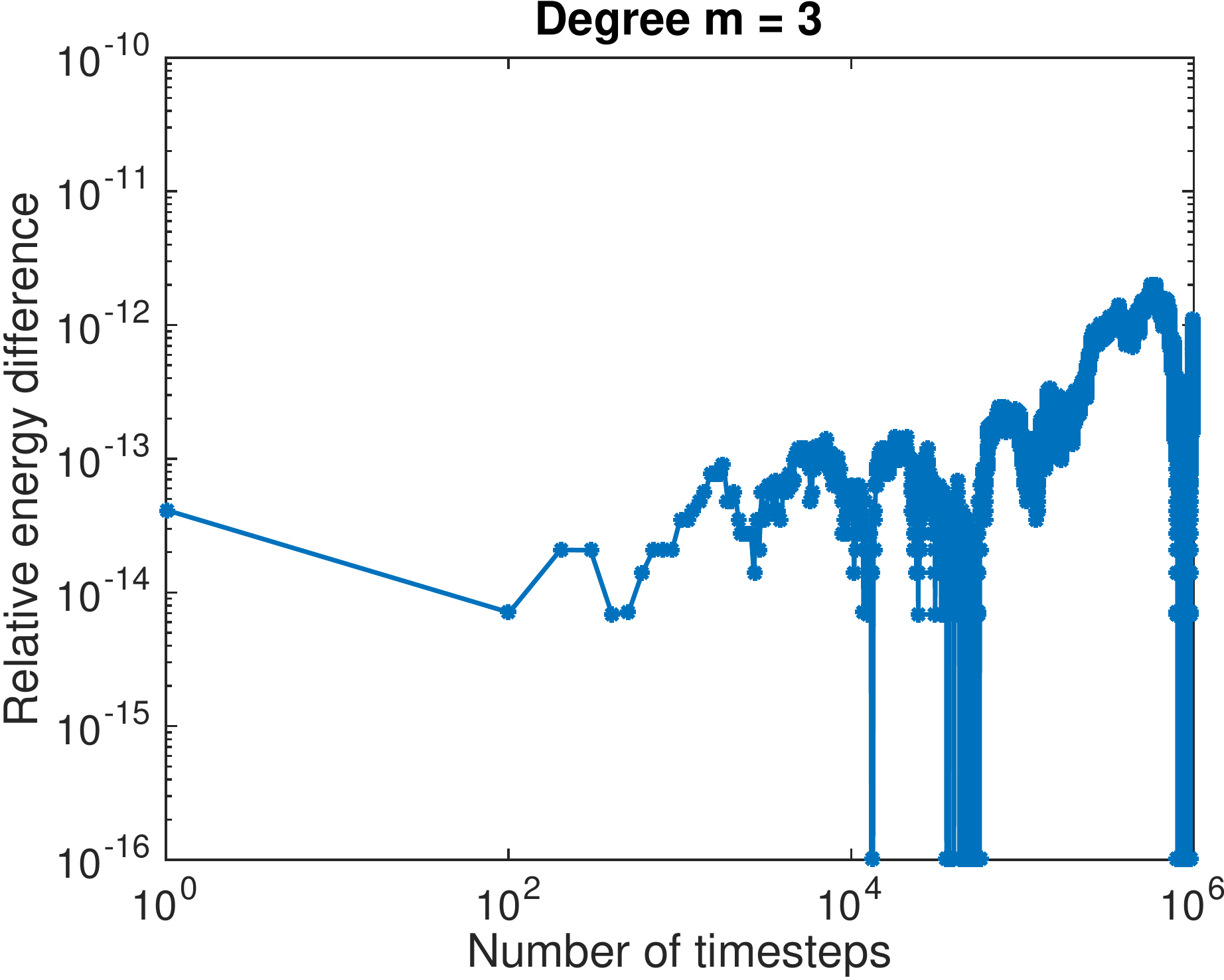}
  \includegraphics[width=0.32\textwidth]{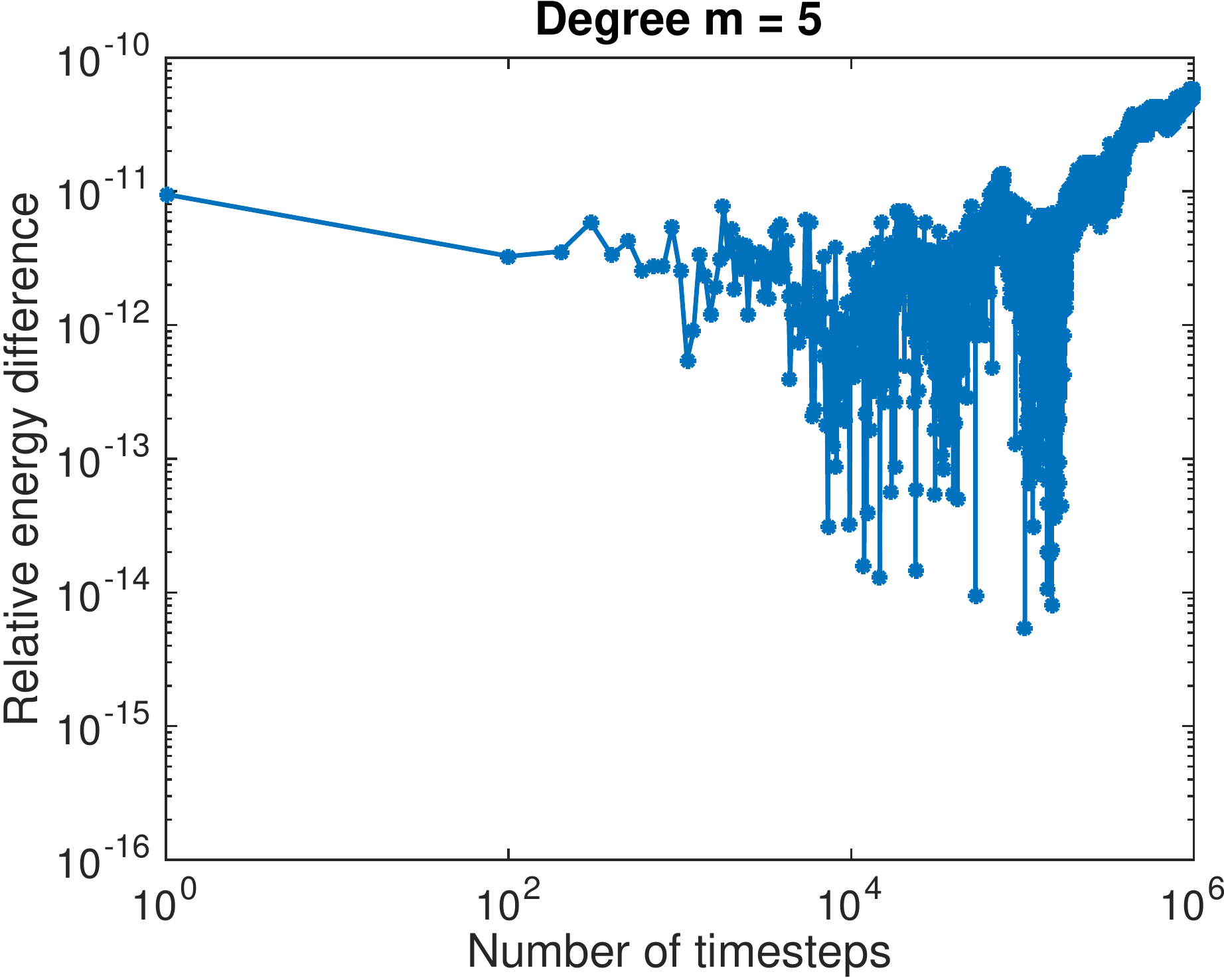}
  \smallskip
  \includegraphics[width=0.32\textwidth]{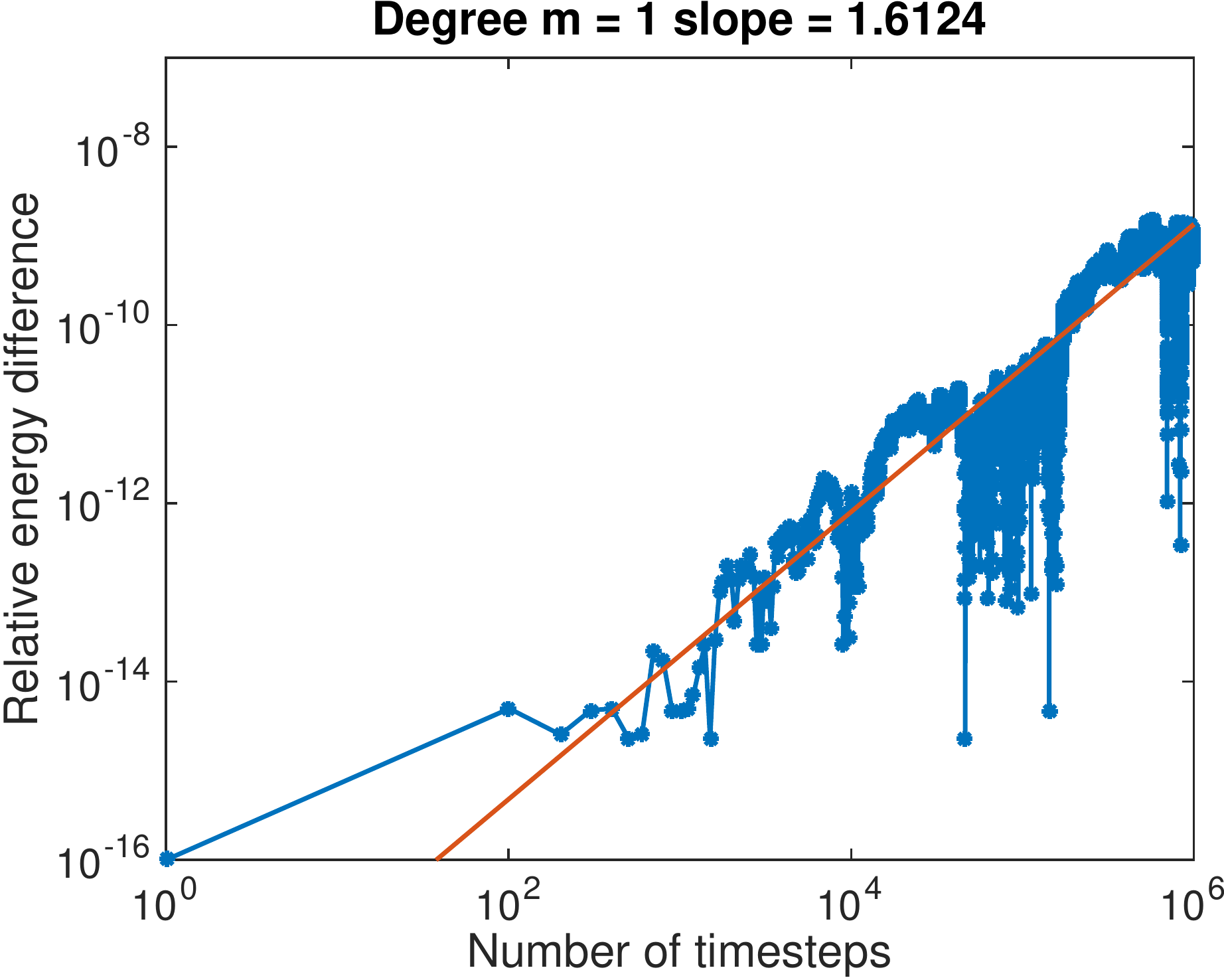}
  \includegraphics[width=0.32\textwidth]{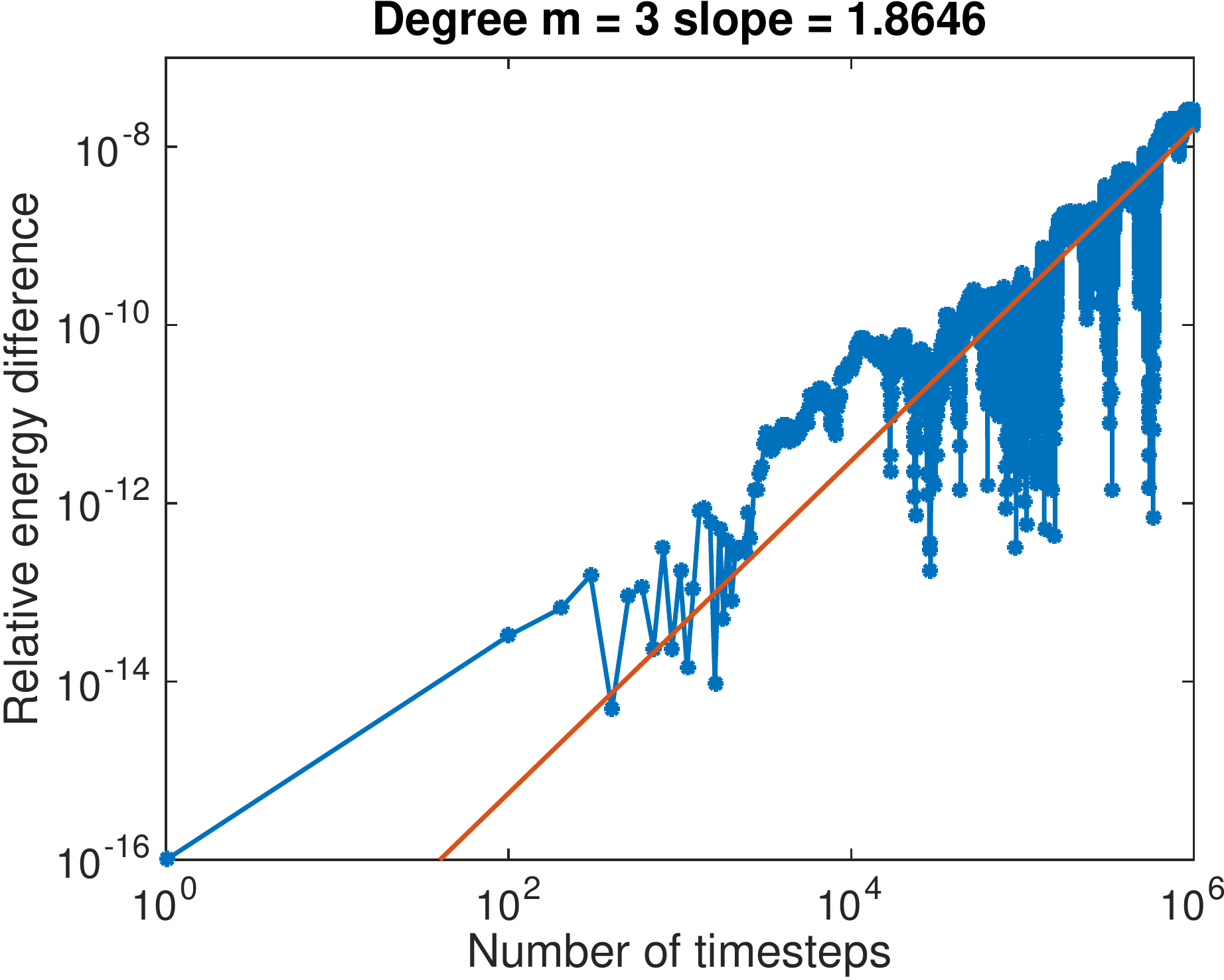}
  \includegraphics[width=0.32\textwidth]{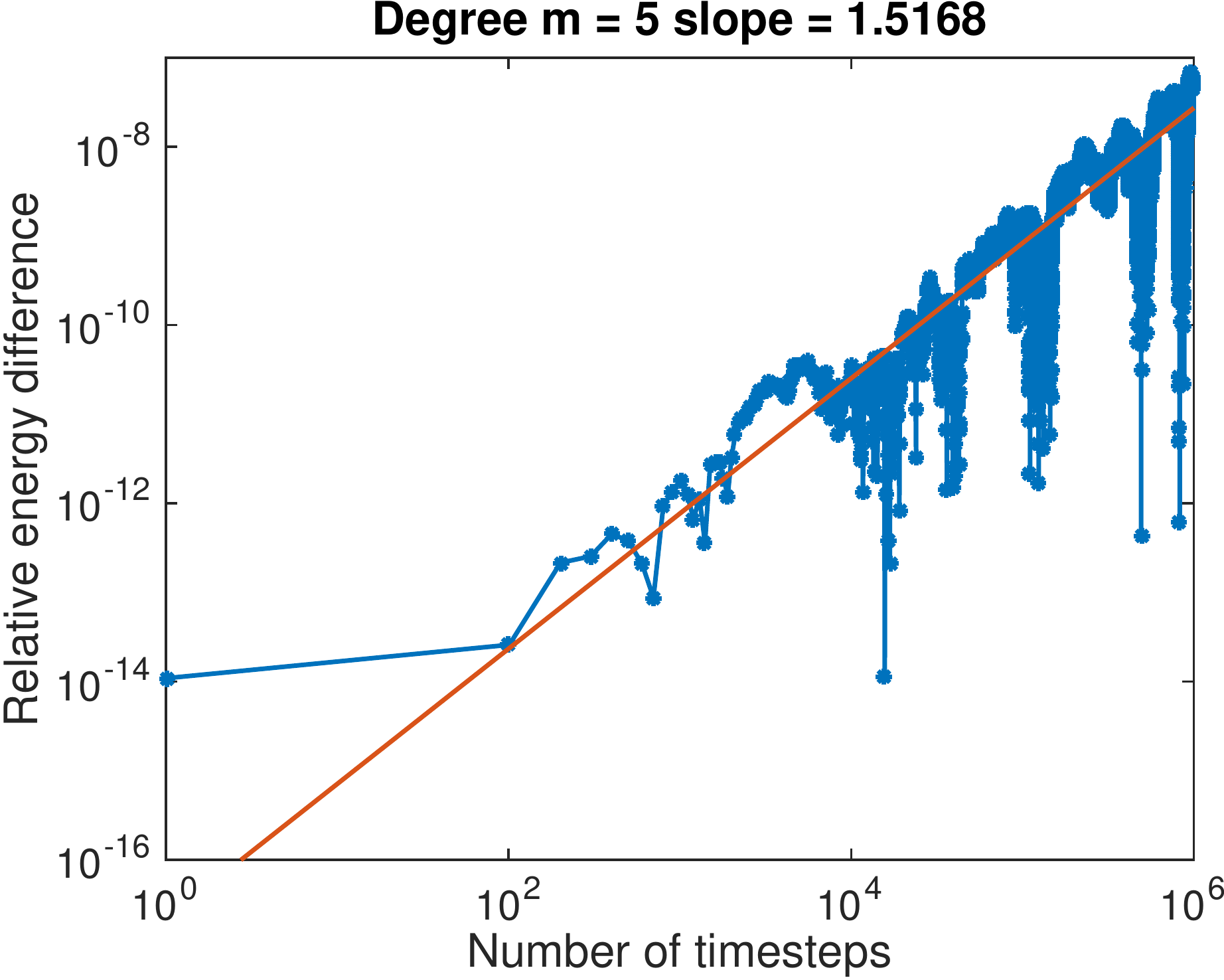}
  \caption{Very long time conservation properties for smooth and rough data under the influence of roundoff errors. \label{fig:conservationsmooth_and _rand}}
  \end{center}
\end{figure}

\subsection{An Example in Two Space Dimensions}
To verify the convergence properties of the two methods in two space dimensions we approximate the plane wave $u = \sin ( 2 \pi \kappa (x + y + \sqrt{2} t)$ on the unit square with periodic boundary conditions. As in the one dimensional case we fix $\lambda = 0.8$ and $1.0$ and take the final time to be as close to $4.18$ as possible using an integer number of time steps. The parameter $\kappa$ is taken to be $m+1$.

In Figures \ref{fig:2d_errors_diss} and \ref{fig:2d_errors_cons} we display the $L_2$-errors in $u$ for the conservative and dissipative methods. As in the one dimensional case we see that the methods are of order $2m$ and $2m-1$ when $\lambda <1$ and $2m+2$ $2m$ respectively when $\lambda=1.0$.

\begin{figure}[]
  \begin{center}
  \includegraphics[width=0.45\textwidth]{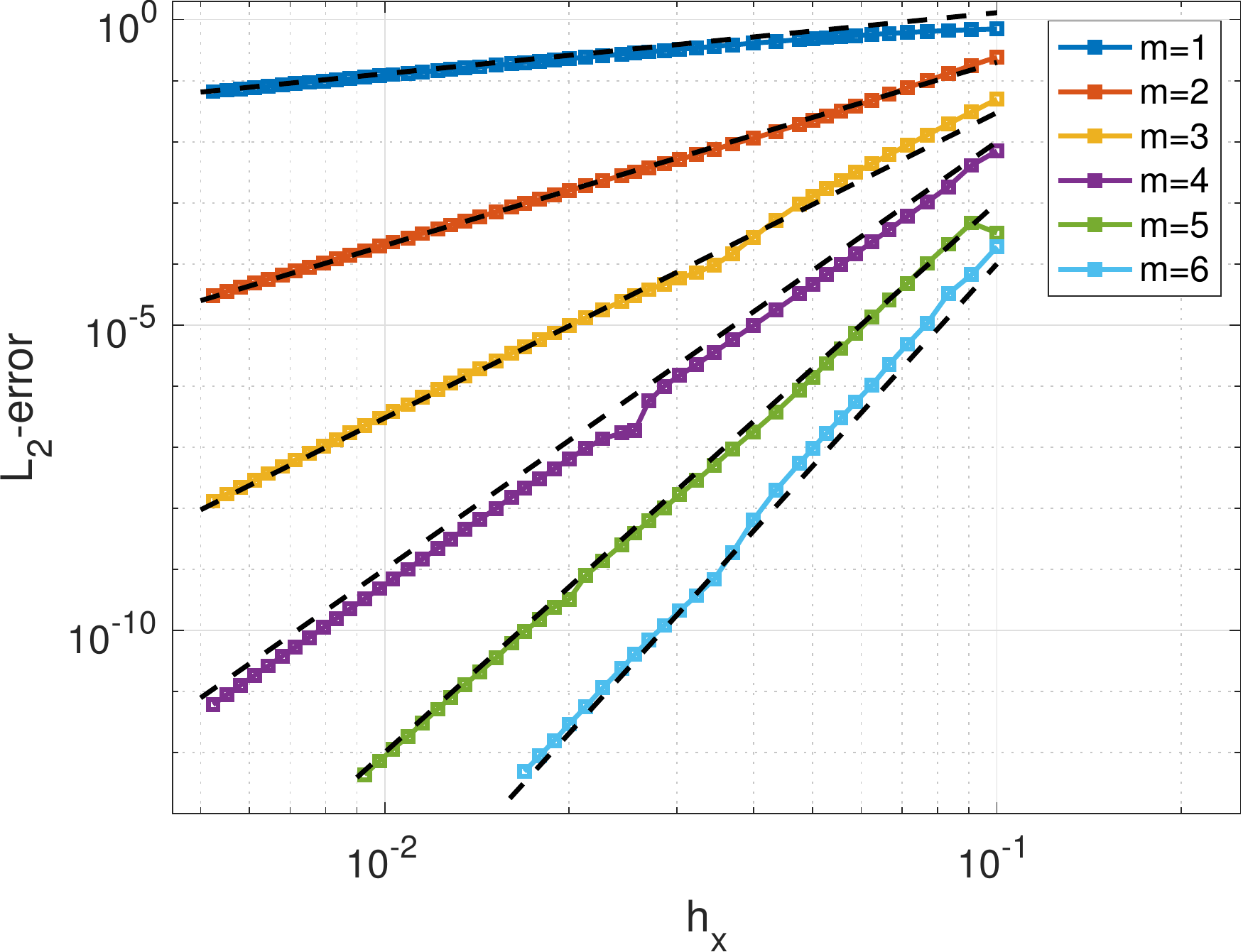}\ \ 
  \includegraphics[width=0.45\textwidth]{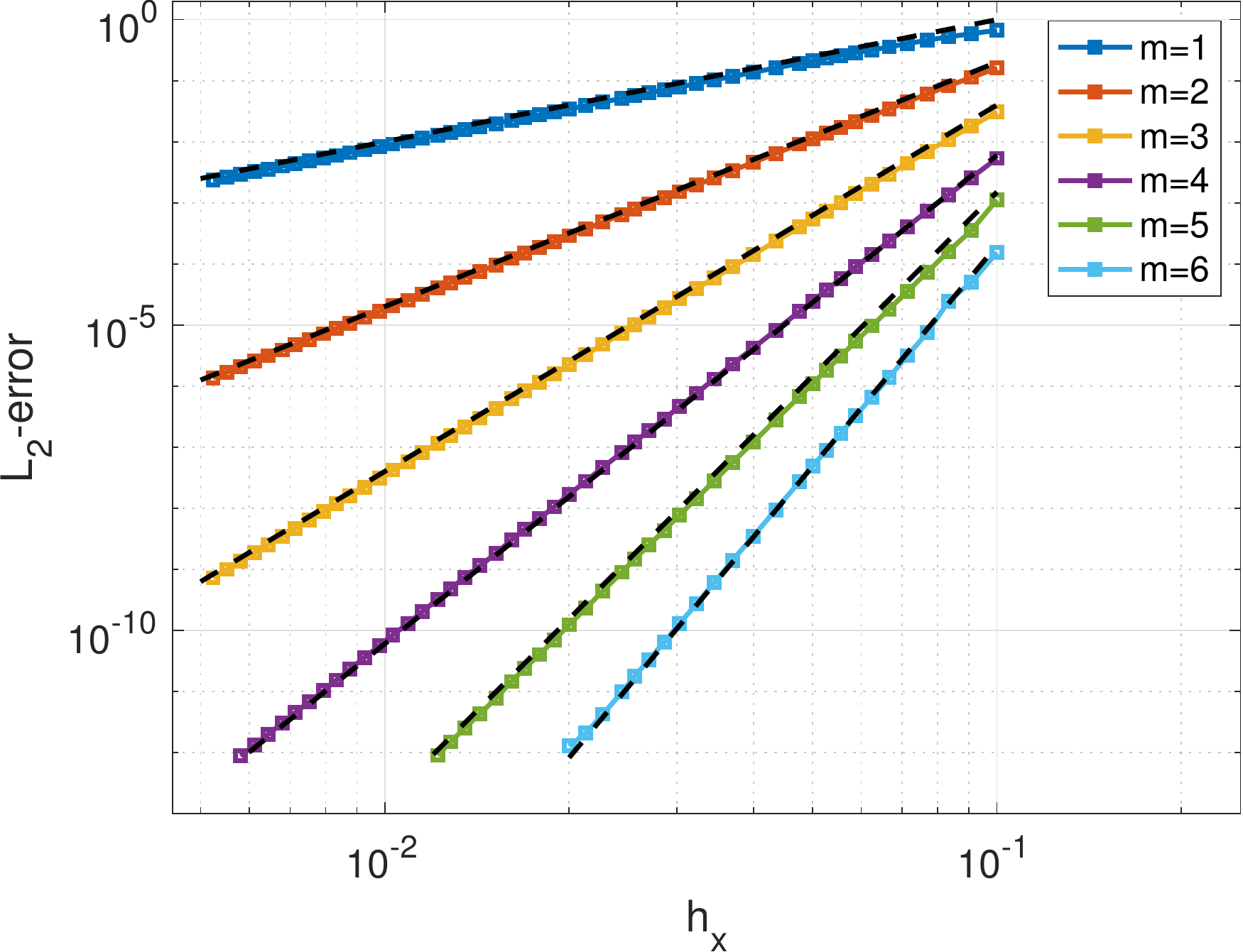}\\
  \caption{Displayed are the $L_2$-errors as a function of grid size for $m = 1,\ldots,6$ for $\lambda = 0.8$ to the left and $\lambda = 1.0$ to the right. These results are for the dissipative method and the dashed slopes are $\sim h_x^{2m-1}$ and $\sim h_x^{2m}$ for $\lambda = 0.8$ and $\lambda = 1.0$. \label{fig:2d_errors_diss}}
  \end{center}
\end{figure}

\begin{figure}[]
  \begin{center}
  \includegraphics[width=0.45\textwidth]{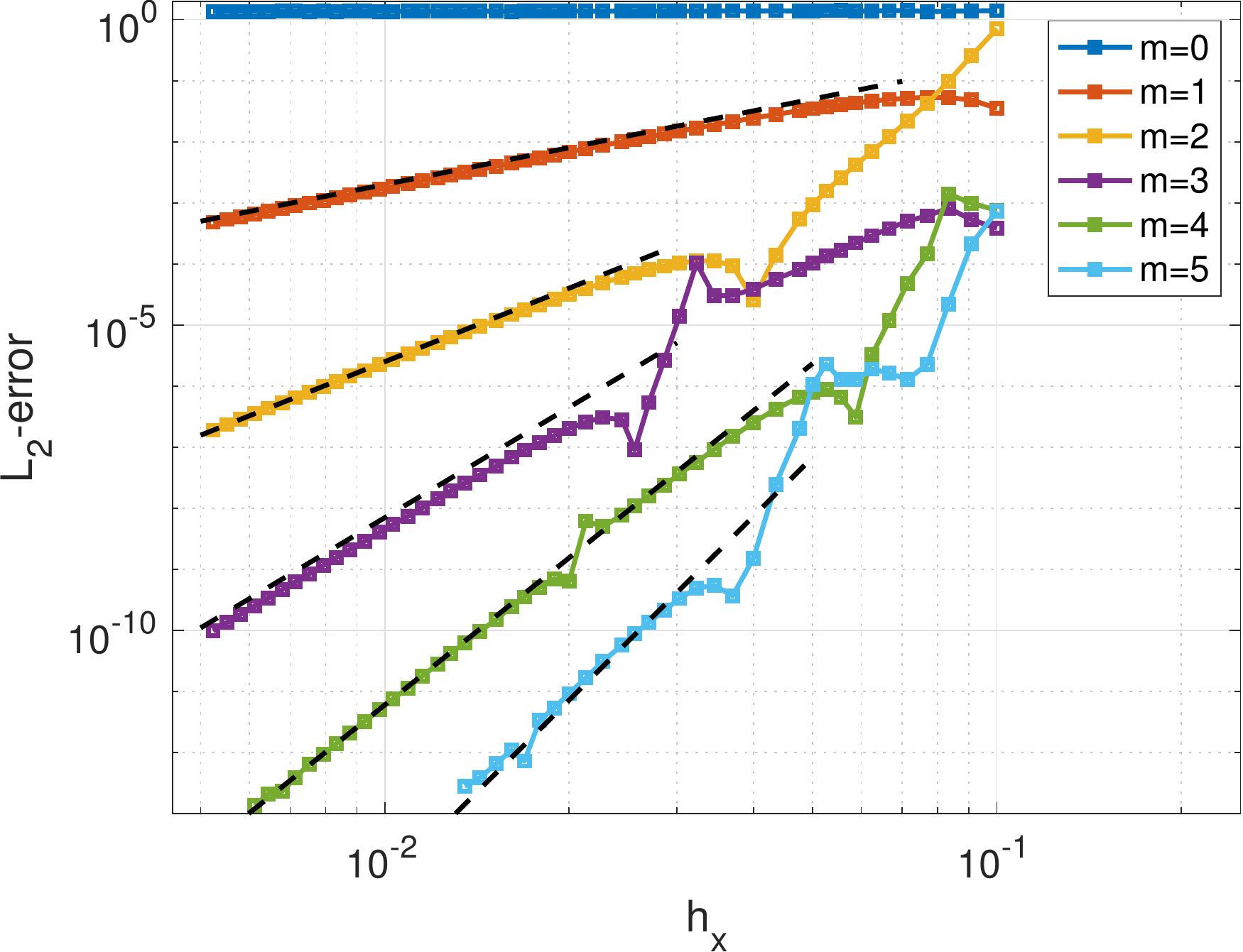}\ \ 
  \includegraphics[width=0.45\textwidth]{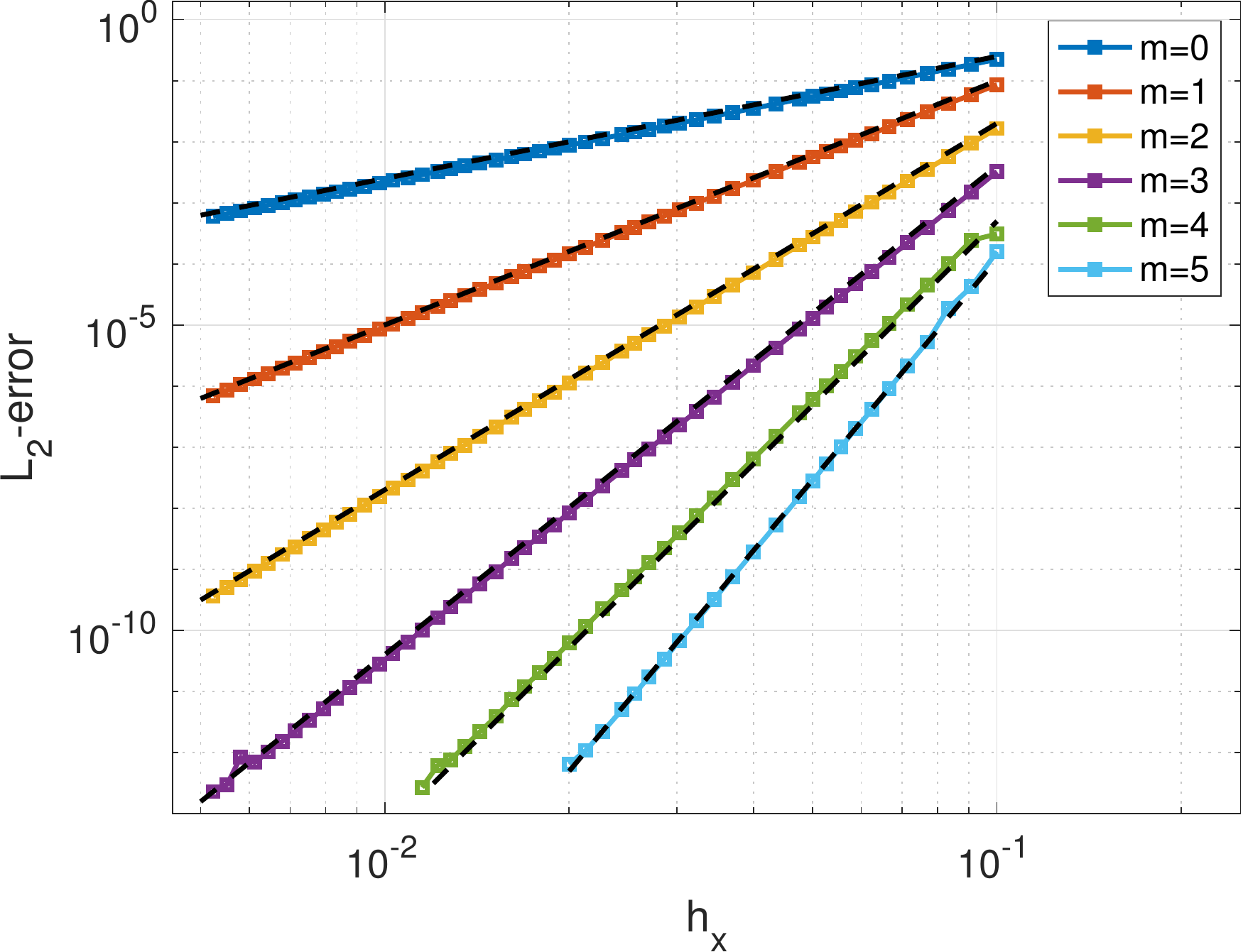}\\
  \caption{Displayed are the $L_2$-errors as a function of grid size for $m = 1,\ldots,6$ for $\lambda = 0.8$ to the left and $\lambda = 1.0$ to the right. These results are for the conservative method and the dashed slopes are $\sim h_x^{2m}$ and $\sim h_x^{2m+2}$ for $\lambda = 0.8$ and $\lambda = 1.0$. \label{fig:2d_errors_cons}}
  \end{center}
\end{figure}

\section{Implementations on Accelerators}\label{sec:occa}
The Hermite methods of Goodrich and co-authors were first tailored to graphics processing units by Dye in \cite{dye_thesis} wherein an implementation for the two dimensional advection equation was proposed. Strategies for three-dimensional equations were later proposed by Vargas and co-authors in \cite{vargas2016gpu}. Here we focus on extending the techniques introduced in \cite{vargas2016gpu} to develop algorithms for the dissipative and conservative Hermite methods in three space dimensions.

In \cite{vargas2016gpu} two strategies for implementing Hermite methods on GPUs were explored. The first approach proposed separate interpolation and evolution kernels enabling specialized tuning of each step. A drawback of this approach is the additional memory required to explicitly store the interpolant. To bypass the need of additional storage, a monolithic kernel was proposed which fused the interpolation and evolution procedures into a single kernel call. Performance results in \cite{vargas2016gpu} demonstrated that both approaches led to a comparable time to solution. 

Keeping consistent with the CUDA nomenclature \cite{Sanders:2010:CEI:1891996}, we expose two levels of parallelism. The first layer assigns a block of threads to carryout the interpolation and evolution procedure on a cell. The second layer exposes fine grained parallelism in which block local threads carry out the local operations found in constructing the interpolant and evolution step. Exploiting the tensor product structure of the polynomials, the interpolants are reconstructed in a dimension by dimension manner and expressed as a series of matrix-matrix multiplications. To maximize performance, shared memory is used to hold intermediate computations (shared memory is replaced with memory on the stack when considering the CPU). Additionally, we apply the shared memory rolling technique as used in \cite{vargas2016gpu, vargas2017hermite}.

The experiments presented in this manuscript were carried out on a single node of SMU's ManeFrame II compute cluster using double precision and an NVIDIA Tesla P100 graphics card as the accelerator. The card has a theoretical peak bandwidth of 732 GB/s and can potentially perform 4700 Gigaflops in double precision. To estimate a more ``realistic'' bandwidth we consider a simple DAXPY (combination of scalar multiplication and vector addition) kernel. For a sufficiently large vector a streaming bandwidth of 552 GB/s was observed on the graphics card; although this is not indicative of the card's capabilities it does serve as a representative of `` achievable peak'' performance numbers.

In this work we evaluate kernel performance by measuring the effective arithmetic throughput and effective memory bandwidth using the NVIDIA profiler. The effective arithmetic throughput is computed using the number of floating-point operations given by flop\_count\_dp, and the effective memory bandwidth is obtained by summing \textit{dram\_read\_throughput} and \textit{dram\_write\_throughput}. Here bandwidth corresponds to the sum of bytes read and written to global memory by a GPU kernel. We adopt OCCA as our API \cite{medina2015okl} which allows us to cross-compile code into a variety of API's. For completeness we provide a comparison of time to solution for a GPU and a  multi-core CPU. The multi-core CPU is a dual 18-core Intel Xeon CPU E5-2695 with an operating frequency of 2.10 GHz.

We note that these experiments are for problems posed in three space dimensions, as it seems most important to develop efficient implementations of the method with an eye towards challenging large-scale problems. 

\subsection{Dissipative Method}
Following the techniques described above, we begin with tailoring the dissipative Hermite method to the graphics processing unit. The main challenge in tailoring the method stem from the varying number of degrees of freedom used in representing the displacement and velocity variables.  This becomes troubling as building interpolants of varying spatial order leads to irregular access patterns. To overcome this challenge local arrays are padded with zeros to ensure data structures of equal dimensions thus simplifying the implementation.  The first version of the method, which will be referred to as the \textbf{simple-method}, introduced in this article reconstructs  a single Hermite interpolant for the displacement and velocity components respectively; while the second variant, which will be referred to as \textbf{full-method} introduces an additional interpolant for each spatial dimension (total of five interpolants), the additional interpolants were necessary in order to maintain $\Delta t \approx  \min (h_x, h_y, h_z)$. In this section we explore both version and we compare time per iteration. 

We begin our performance analysis by considering the two kernel approach for the simple-method and follow by considering monolithic kernels for both the simple and full methods. As the full method would require substantial global memory for the five interpolants we did not consider the two kernel approach. Figure \ref{fig:DissipativeA} reports the observed bandwidth and GFLOPs for both versions of the method. For these numerical experiments, we chose the number of stages of the Hermite-Taylor Scheme to be $2m+2$, which is less than what is required for exact evolution of the cell polynomials, but sufficient for convergence at design order. As intermediate computations are stored in shared memory we find that the second variant ( with additional interpolants) required more shared memory than supported in the graphics card and thus we cannot go beyond an $m=2$ method. Here we may conclude that by considering two kernels we can achieve a significantly higher dram throughput but not a significantly better time to solution.

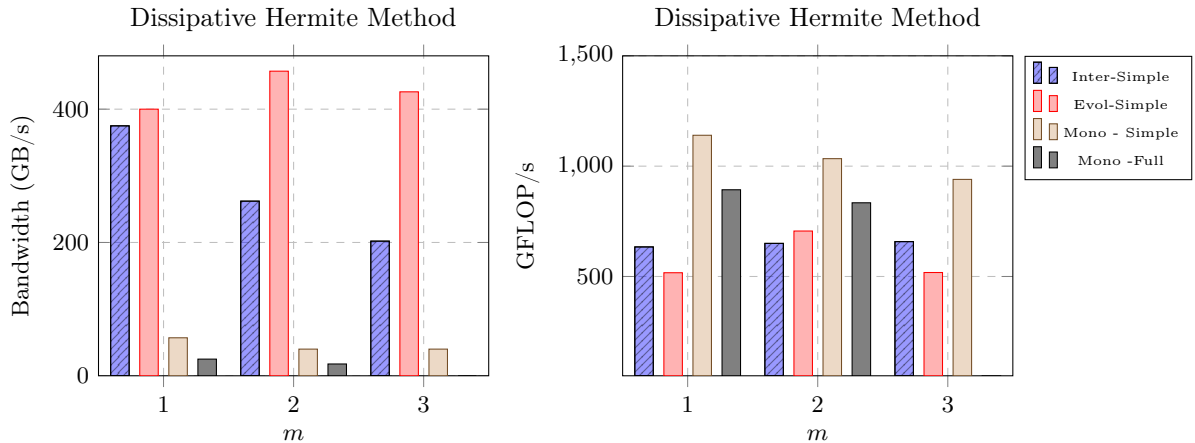
\begin{figure}[h!]
\centering
\subfloat{
\begin{tikzpicture}
\begin{axis}[
	legend style={font=\tiny},
	width=.41\textwidth,
	xmin=0.5,xmax=3.5,
	xtick={1,2, 3},
	ymin=0,
	ymax=480,
	ylabel=Bandwidth (GB/s),
	xlabel=$m$,
	ybar=4pt,
	bar width=7pt,
	xmajorgrids=true,
	ymajorgrids=true,
	grid style=dashed,
	legend pos=north west,
	title=Dissipative Hermite Method
]

\addplot [fill=blue, fill opacity=0.4, postaction={pattern=north east lines}]
	coordinates {(1,375) (2,262) (3,202)}; 	

\addplot
	coordinates {(1,400) (2,457) (3,426)}; 

\addplot 
	coordinates {(1,57) (2,40) (3,40)}; 

\addplot
	coordinates {(1,25) (2,17.7) (3,0)};



\end{axis}
\end{tikzpicture}
}
\subfloat{
\begin{tikzpicture}
\begin{axis}[
	legend style={font=\tiny},
    xmin=0.5,xmax=3.5,
	width=.41\textwidth,
	xtick={1,2, 3},
	ylabel=GFLOP/s,
	xlabel=$m$,
	ymin=50,
	ymax=1500,
	ybar=4pt,
	bar width=7pt,
	xmajorgrids=true,
	ymajorgrids=true,
	legend pos=north west,
	grid style=dashed,
	title=Dissipative Hermite Method,
	legend pos=outer north east
]

\addplot [fill=blue, fill opacity=0.4, postaction={pattern=north east lines}]
	coordinates {(1,634) (2,650) (3,658)}; 	

\addplot
	coordinates {(1,517) (2,706) (3, 518)}; 

\addplot
	coordinates {(1,1140) (2,1034) (3,940)};

\addplot
	coordinates {(1,893) (2,834) (3,0)}; P-100

\legend{Inter-Simple, Evol-Simple, Mono - Simple,Mono -Full}
\end{axis}
\end{tikzpicture}
}
\caption{Performance of the Dissipative Hermite Method.\label{fig:DissipativeA}}
\end{figure}

\begin{table}[h!]
\centering
\begin{tabular}{|l|c|c|c|c|}
\hline
Choice of $m$        &  $m=1$  & $m=2$ & $m=3$ \\ \hline
Simple - Interpolation &  375 / 634 & 262 / 650   & 202 / 658    \\ \hline
Simple - Evolution   & 400 / 517 & 457 / 706  & 426 / 518    \\ \hline
Simple - Monolithic   & 57 / 1140 & 40 / 1034  & 40 / 940   \\ \hline
Full      - Monolithic   & 25 / 893  & 17.7 / 834  &  -   \\ \hline\end{tabular}
\caption{Table of Bandwidth (GB/s) and GFLOP/s. }
\label{table:Bandwidth/GFLOPS}
\end{table}


\subsubsection{Time per Iteration}
For completeness, a comparison of time per iteration is presented. Here we tailor separate OCCA kernels for the GPU and CPU. Table \ref{table:timeToSolutionA}  reports the time per iteration when targeting different platforms via the OCCA API. Obviously the
GPU delivers the solution in substantially shorter time. 


\begin{table}[h!]
\centering
\begin{tabular}{|l|c|c|c|c|}
\hline
Choice of $m$       &  $m=1$  & $m=2$ & $m=3$\\ \hline
OCCA::OpenMP - Simple Method  &   0.88 sec & 1.16 sec  &  1.92 sec \\ \hline
OCCA::OpenMP - Full Method  &  1.50 sec  & 2.10 sec & 2.57  sec \\ \hline
OCCA::CUDA  - Single -Simple method &  0.057 sec & 0.087   sec & 0.10   sec \\ \hline
OCCA::CUDA - Two Kernels - Simple method   & 0.15  sec &  0.15 sec &  0.156 sec \\ \hline
OCCA::CUDA - Full method   & 0.11 sec &  0.17  sec &  - \\ \hline
\end{tabular}
\caption{Comparison of time per iteration of the dissipative Hermite kernels executed on the GPU and CPU. For orders $m=1,2, 3$ the number of grid points were chosen to be 160,  120, 90  per Cartesian direction.}\label{table:timeToSolutionA}
\end{table}

\subsection{Conservative Hermite Method}
Building on the point-wise formula given in Equation (\ref{eq:convUpdate}) we consider an algorithm which 
first approximates high order derivatives via Hermite interpolation, and propagates the solution by accumulating the necessary spatial derivatives.  Figure \ref{fig:nonDissipative} reports the peak achieved bandwidth and GFLOPs for both the separate interpolation/evolution and monolithic kernels. Similar to previous experiments, we find that the performance of separate interpolation and evolution kernels is limited by the device's ability to read and write to global memory.  The monolithic kernel does not experience this bottleneck since it does not explicitly write out the interpolant.

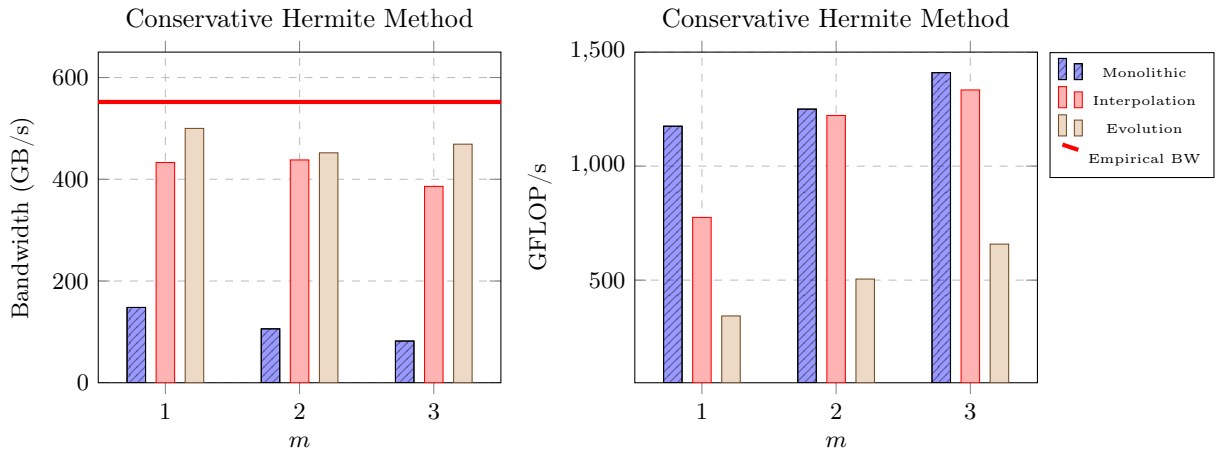
\begin{figure}[h!]
\centering
\subfloat{
\begin{tikzpicture}
\begin{axis}[
	legend style={font=\tiny},
	width=.42\textwidth,
	xmin=0.5,xmax=3.5,
	xtick={1,2,3},
	ymin=0,
	ymax=650,
	ylabel=Bandwidth (GB/s),
	xlabel=$m$,
	ybar=4pt,
	bar width=7pt,
	xmajorgrids=true,
	ymajorgrids=true,
	grid style=dashed,
	legend pos=north west,
	title=Conservative Hermite Method
]

\addplot[fill=blue, fill opacity=0.4, postaction={pattern=north east lines}]
	coordinates {(1,148) (2,106) (3,82)}; 

\addplot
	coordinates {(1,433) (2,438) (3,386)}; 
	
\addplot
	coordinates {(1,500) (2,452) (3,469)};
	
\addplot[red,sharp plot,update limits=false, style=ultra thick] coordinates {(0,552) (10,552)};


\end{axis}
\end{tikzpicture}
}
\subfloat{
\begin{tikzpicture}
\begin{axis}[
	legend style={font=\tiny},
    xmin=0.5,xmax=3.5,
	width=.42\textwidth,
	xtick={1,2,3},
	ylabel=GFLOP/s,
	xlabel=$m$,
	ymin=50,
	ymax=1500,
	ybar=4pt,
	bar width=7pt,
	xmajorgrids=true,
	ymajorgrids=true,
	legend pos=north west,
	grid style=dashed,
	title=Conservative Hermite Method,
	legend pos=outer north east
]
\addplot[fill=blue, fill opacity=0.4, postaction={pattern=north east lines}]
	coordinates {(1,1175) (2,1250) (3,1410)};

\addplot
	coordinates {(1,775) (2,1222) (3,1334)};

\addplot	
	coordinates {(1,343) (2,504.78) (3,658)}; 

\addplot[red,sharp plot,update limits=false, style=ultra thick] coordinates {(0,0) (0,0)};
	
\legend{Monolithic,Interpolation,Evolution, Empirical BW}
\end{axis}
\end{tikzpicture}
}
\caption{Performance for the conservative Hermite method.\label{fig:nonDissipative}}
\end{figure}

\begin{table}[h!]
\centering
\begin{tabular}{|l|c|c|c|c|}
\hline
Choice of $m$        &  $m=1$  & $m=2$ & $m=3$ \\ \hline
Monolithic Kernel  &  148/1175  & 106/1250  &  82/1410 \\ \hline
Interpolation Kernel   & 433/775 &  438/1222 &  386/1334  \\ \hline
Evolution Kernel    & 500/343 & 452/504  & 469/658 \\ \hline
\end{tabular}
\caption{Table of Bandwidth (GB/s) and GFLOP/s. \label{table:Bandwidth/GFLOPSA}}
\end{table}

\subsubsection{Time per Iteration}
For completeness, a comparison of time per iteration is presented. As before we tailor separate OCCA kernels for the GPU and CPU. Table \ref{table:cpuvgpuRuntime}  reports the time per iteration when targeting different platforms via the OCCA API. Again the time to
solution is significantly smaller for the GPU. 

\begin{table}[h!]
\centering
\begin{tabular}{|l|c|c|c|c|}
\hline
Choice of $m$        &  $m=1$  & $m=2$ & $m=3$ \\ \hline
OCCA::OpenMP  &  0.55 sec &  0.86 sec &  1.12 sec \\ \hline
OCCA::CUDA - Single Kernel   & 0.035 sec &  0.052 sec &  0.0579 sec \\ \hline
OCCA::CUDA - Two Kernels    & 0.057sec &  0.0628 sec &  0.0619 sec\\ \hline
\end{tabular}
\caption{Comparison of time per iteration of Hermite kernels executed on the GPU and CPU. For orders $m=1,2,3$ the number of grid points were chosen to be 280, 190, 140 points per Cartesian direction. Noticeably a single GPU kernel offers a slightly better time to solution due to the advantage of not having to store the interpolant.\label{table:cpuvgpuRuntime}}

\end{table}


\section{Conclusion and Future Work}

We have demonstrated two approaches to solving the scalar wave equation using Hermite interpolation. Both are stable for CFL up to
$1$ independent of order and converge at a rate roughly twice the number of degrees-of-freedom per node. Moreover, we find that the
methods do an excellent job of exploiting GPUs, with time-to-solution on a single NVIDIA P100 twenty to thirty times less than on a
dual 18-core CPU.

From a practical perspective, future work must include implementations in complex geometry; we plan to use both overset
structured grids and hybrid structured-unstructured grids. In addition we will generalize the method to systems, such as the
elastic wave equation. Theoretically, although one can prove that the methods are stable at full CFL,
we have as yet been unable to extend the convergence analyses to multiple space dimensions.

\bibliographystyle{plain}
\bibliography{appelo,hag}
\end{document}